\newtheorem{thm}{Theorem}[section]
\newtheorem{cor}[thm]{Corollary}
\newtheorem{lem}[thm]{Lemma}
\newtheorem{prop}[thm]{Proposition}
\theoremstyle{definition}
\newtheorem{defn}[thm]{Definition}
\newtheorem{rem}[thm]{Remark}
\numberwithin{equation}{section}
\newcommand{\RR}{\mathbb R}
\newcommand{\QQ}{\mathbb Q}
\newcommand{\NN}{\mathbb N}
\newcommand{\ZZ}{\mathbb Z}
\newcommand{\CC}{\mathbb C}
\newcommand{\PP}{\mathbb P}
\newcommand{\FF}{\mathbb F}
\newcommand{\lra}{\longrightarrow}
\newcommand{\ra}{\rightarrow}
\newcommand{\tC}{\widetilde{C}}
\newcommand{\cO}{\mathcal{O}}
\newcommand{\ch}{\mathfrak{h}}
\DeclareMathOperator{\Aut}{{Aut}}
\DeclareMathOperator{\Hom}{{Hom}}
 \DeclareMathOperator{\Ker}{Ker}
  \DeclareMathOperator{\Fix}{Fix}
 \DeclareMathOperator{\Nm}{{Nm}}
 \DeclareMathOperator{\diag}{{diag}}
 \DeclareMathOperator{\im}{Im}
\DeclareMathOperator{\End}{{End}}
\DeclareMathOperator{\mult}{mult}
\DeclareMathOperator{\id}{id}
\newcommand{\Th}[2]{{\theta\genfrac{[}{]}{0pt}{-1}{#1}{#2}}}
\begin{document}

\title[ ]{ Hyperelliptic curves on $(1,4)$ polarised abelian surfaces}
\author{ Pawe\l{} Bor\'owka,  Angela Ortega}
\address{P. Bor\'owka \\ Institute of Mathematics, Jagiellonian University in Krak\'ow, Poland}
\email{Pawel.Borowka@uj.edu.pl}
              
\address{A. Ortega \\ Institut f\" ur Mathematik, Humboldt Universit\"at zu Berlin \\ Germany}
\email{ortega@math.hu-berlin.de}

\subjclass{14H40, 14H30}

\date{\today }
\begin{abstract} 
We investigate the number and the geometry of smooth hyperelliptic
curves on a general complex abelian surface. We show that the only
possibilities of genera of such curves are $2,3,4$ and $5$.  We focus on the genus 5 case.
We prove that up to translation, there is a unique hyperelliptic curve in the linear system of
a general $(1,4)$ polarised abelian surface. Moreover, the curve is
invariant with respect to a subgroup of translations isomorphic to the
Klein group. We give the decomposition of the
Jacobian of such a curve into abelian subvarieties displaying Jacobians of quotient curves and Prym varieties.
Motivated by the construction, we prove the statement: every \'etale Klein covering of a
hyperelliptic curve is a hyperelliptic curve, provided that the group of $2$-torsion points defining the covering is non-isotropic with respect to the Weil 
pairing  and every element of this group can be written as a difference of two Weierstrass points. 
\end{abstract}

\maketitle
\section{Introduction}
In the research connecting curves and abelian varieties an special attention has been given to abelian surfaces, because then 
the curves become divisors. Many classical results focus on principally polarised abelian surfaces, which are mostly Jacobians of smooth genus 2 curves. 
In particular, the linear series of the polarising line bundle on a principally polarised surface contains a unique section (up to a scalar) with zero locus being a
smooth genus 2 curve and every genus 2 curve arises in this way.

Contrary to these results, there is little known for general $(d_1,d_2)$ polarised abelian surfaces. The curves in the linear series form a 
$(d_1d_2-1)$-dimensional family with a general member being a smooth curve of genus $1+d_1d_2$. A well known example of a curve in the linear series
of the polarisation is an \'etale  cyclic $d:1$ covering of  a genus 2 curve, that is embedded in a $(1,d)$ polarised surface.

One can consider symmetric line bundles and symmetric curves with respect to the  $(-1)$-action of the abelian variety.  Using the  projection map to the
Kummer surface we see that such curves, by definition admit a $2:1$ map branched at the 2-torsion points, through which the curve passes with odd 
multiplicity. Conversely, every hyperelliptic curve that is embedded into an abelian surface can be mapped in such a way that the image is symmetric. 
This fact has been recently used in \cite{BOPY}, to compute the number of hyperelliptic curves on abelian surfaces using Gromov-Witten theory on the 
Kummer surfaces. In particular they proved that these numbers are finite; the details are in \cite[Table 1]{BOPY}.

Independently, in \cite{BS} there is a construction of the so called $(1,3)$-theta divisors on $(1,3)$ polarised abelian surfaces, that are hyperelliptic curves. 
Using the $(-1)$-action on a general $(1,3)$ polarised surface, one can decompose the linear series of the symmetric polarising line bundle into $\pm 1$ 
eigenspaces of dimensions $1$ and $2$, and therefore distinguish a unique theta function and its zero locus, that is a genus 4 hyperelliptic curve.

The motivation of this paper has been to complete the study of which and how many smooth hyperelliptic curves can be embedded into a general abelian
 surface. The starting point of the investigation is Theorem \ref{poss-genera}, where we prove that the necessary condition is that the genus 
 $g\in\{2,3,4,5\}$ and the surface is polarised of type $(1,g-1)$. Since the cases $g=2$ and $g=3$ are classical and $g=4$ is treated in \cite{BS}, we focus 
 on the case $g=5$ embedded into $(1,4)$ polarised surfaces. Using a similar construction to the one in \cite{BS}, yet technically more difficult, we prove the
  following theorem.
\vspace{0.2cm}

\textbf{ Theorem \ref{C_A}.} \textit{ 
Let $C$ be a smooth hyperelliptic curve of genus 5 embedded in a general $(1,4)$ polarised abelian surface $A$. Then $C$ is a  translation
of the curve $C_A$, defined as the zero locus of the unique (up to multiplication by a scalar) odd theta function $\theta_A$ 
on $A$.}\vspace{0.2cm}

In particular, we show that the conditions in Theorem \ref{poss-genera} are sufficient. Moreover, we explicitly constructed all the smooth 
curves predicted  in \cite[Table 1]{BOPY}.

Apart from studying the number of such hyperelliptic curves, we investigated their geometry. We prove that the curve $C_A$ is invariant with respect 
to the Klein subgroup of the group of translations of $A$. This led to the investigation of Klein coverings of hyperelliptic curves. Using the theory of coverings
we prove the following theorem.
\vspace{0.2cm}

\textbf{Theorem \ref{thm:klehyp}.}
\textit{Let $H$ be a smooth hyperelliptic curve of genus $g\geq 2$ with a subgroup $G=\{0,\eta_1,\eta_2,\eta_1+\eta_2\} \subset JH[2]$ and let $\tC \ra H$
 be the Klein covering of (H,G).
 \begin{itemize}
 \item[(a)] If $\tC$ is hyperelliptic then $G$ is non-isotropic with respect to the Weil pairing on $JH[2]$.
 \item[(b)] If  $G$ 
 is non-isotropic with $\eta_1$ and $\eta_2$ (and hence $\eta_1+\eta_2$) being the difference of two Weierstrass points,
 then $\tC$ is hyperelliptic.
 \end{itemize}}

This result is independent of the rest of the paper. We apply it to the following construction. Let $\widehat{A}=JH/G$ be the quotient surface. Lemma 
\ref{lem:polarisations} shows that $\widehat{A}$ is $(1,4)$ polarised so is its dual, called $A$. We have a map $\pi:A\ra JH$ and define $\tC=\pi^{-1}(H)$. 
By construction, $\pi|_{\tC}:\tC\ra H$ is an \'etale covering defined by a non-isotropic Klein subgroup $G$, hence hyperelliptic and embedded in $A$. As a
consequence of the above statements, we obtain the main result of the paper: 
\vspace{0.2cm}

\textbf{ Theorem \ref{thm:hyp}.}
A smooth hyperelliptic curve of genus 5 can be embedded into $(1,4)$ polarised abelian surface if and only if it is a non-isotropic \'etale Klein covering of a genus 2 curve.
\vspace{0.2cm}

As an application of the results we decompose the Jacobian of the curve $\tC$ into subvarieties. To make the statement precise, we need to introduce
 the following notation. If $M$ and $N$ are abelian subvarietes of $X$ with restricted polarisation of type $D$, respectively  $D'$, and with $\epsilon_M,
 \epsilon_N$ the associated idempotents, we write $X=M^D\boxplus N^{D'}$ if $\epsilon_M+\epsilon_N=1$. Moreover, let $\iota$ be the hyperelliptic involution 
 on $\tC$ and $\sigma, \tau$ the involutions of the Klein coverings. Define the elliptic curves  $E_{\sigma}=\tC/\langle \sigma, \iota\tau \rangle, \ 
 E_{\tau}=\tC/\langle \tau, \iota\sigma \rangle, \ E_{\sigma\tau}=\tC/\langle \sigma\tau, \iota\sigma \rangle$. 
\vspace{0.2cm}

\textbf{ Theorem \ref{thm:jacc_a}.}
Let $A$ be a general $(1,4)$ polarised abelian surface and $\tC$ the hyperelliptic curve embedded in $A$. Then
$$J\tC=\widehat{A}^{(1,4)}\boxplus E^{(4)}_{\sigma} \boxplus E^{(4)}_{\tau}\boxplus E^{(4)}_{\sigma\tau}.$$
In the above presentation, one can recognise images of Jacobians of all the quotient curves and Prym varieties of the quotient maps.
\vspace{0.2cm}

The plan of the paper is as follows. We recall some basic facts and definitions in Section~2. In Section 3 we construct 
smooth hyperelliptic curves on a general $(1,4)$ polarised abelian surface and prove Theorem \ref{C_A} via degeneration to a  product of two 
elliptic curves. In Section 4 we prove Theorem \ref{thm:klehyp} by means of a result in \cite{F} 
and give a second construction  of hyperelliptic curves on a $(1,4)$ polarised abelian surface that leads to Theorem \ref{thm:hyp}. 
In the last section we give the decomposition of the Jacobian of $\tC$ in terms of the abelian subvarieties 
that it contains.

\section{Preliminaries}

We  recall very briefly some definitions and known results from theories of abelian varieties and coverings of curves. For details, we refer to \cite{BL}.
Let $X$ be a complex abelian variety of dimension $g$, i.e. a projective complex torus. The Neron-Severi class of an ample line bundle $L$ will be called 
a polarisation. The dual abelian variety to $X$ will be denoted by $\widehat{X}$. A polarisation induces an isogeny $\phi_L: X \ra \widehat{X}$. 
The kernel of $\phi_L$,  denoted by $K(L)$ is isomorphic to $(\ZZ^g/D\ZZ^g)^2$, where $D=\diag(d_1,\ldots,d_g)$ for some $d_i\in\ZZ_+, d_i|d_{i+1}$ 
and is called the type of the polarisation.

Let $i:Y \hookrightarrow X$ be the embedding of an abelian subvariety $Y$.  The polarisation $L$ induces an isogeny $\psi_{i^*L} : \widehat{Y} \ra Y$ and the 
{\it norm endomorphism} $\Nm_Y \in \End (X)$  is defined by 
$$
\Nm_Y = i \psi_{i^*L} \hat{i} \phi_L.
$$ 
 One can also associate to $Y$ the symmetric idempotent $\epsilon_Y := \frac{1}{e(Y)}\Nm_Y \in 
\End_{\QQ}(X) $, where $e(Y)$ is the exponent of $Y$. Conversely, if $\epsilon$ is a symmetric idempotent, then there exists an integer $n>0$ such that 
$n\epsilon \in \End(X)$ and $\im (n\epsilon)$ defines an abelian subvariety of $X$. This gives a one-to-one correspondence between abelian 
subvarieties of $X$ and symmetric idempotents (\cite[Theorem 5.3.2]{BL}). The map $\epsilon \mapsto 1-\epsilon$ induces an involution on the set 
of symmetric idempotents, therefore   we have an involution  on the set of abelian subvarieties of $X$. If $Y=\im (n\epsilon)$, then $Z=\im(n(1-\epsilon))$
is the {\it complementary abelian subvariety} to $Y$ with respect to the polarisation $L$.

For a smooth curve $C$, by $JC$, we denote its Jacobian that is canonically a principally polarised (i.e. $D=\id_g$) abelian variety of dimension equal 
the genus $g=g(C)$.
If $f:\tC\ra C$ is a covering then $\im f^*$ is an abelian subvariety of $J\tC$ and the complementary  subvariety is called the {\it Prym variety} of the 
covering and will be denoted by $P(\tC/C)$.

Recall that there is a bijective correspondence between subgroups $\langle \eta \rangle$ of order $n$ of $JC$ and \'etale cycle coverings 
$f: \tC \ra C$  of degree  $n$ (see \cite{mu}). One can also realise  \'etale cyclic coverings in the following way.  
 \begin{prop} \label{prop:cyccov}
 Let $\widehat{X}=JC/\langle \eta \rangle$ and let  $\pi:JC\ra \widehat{X}$ be the quotient isogeny.
Then $\widehat{X}$ can be embedded in $J\tC$ and the restricted polarisation from $J\tC$ to $\hat{X}$ is of type $(1,n,\ldots,n)$. 
If  $\widehat{\pi}: X\ra JC$ is the dual isogeny, then  $\widehat{\pi}^{-1}(C)$ is a curve $\tC$ and $\widehat{\pi}|_{\tC}=f$ is the given covering. 
\end{prop}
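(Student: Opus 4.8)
The plan is to carry out everything through the pullback homomorphism $f^*\colon JC\ra J\tC$ attached to the covering and its dual, using the autoduality of the Jacobian and descent of polarisations. First I would identify the embedded copy of $\widehat X$. For the connected cyclic \'etale covering $f\colon\tC\ra C$ defined by $\eta$ one has $f_*\cO_{\tC}=\bigoplus_{i=0}^{n-1}L^{-i}$, with $L$ the line bundle of order $n$ representing $\eta$; the projection formula then gives $f^*M\cong\cO_{\tC}$ exactly when $M\in\langle\eta\rangle$, so $\Ker(f^*)=\langle\eta\rangle$. Consequently $f^*$ factors as $JC\xrightarrow{\ \pi\ }\widehat X\xrightarrow{\ j\ }J\tC$ with $j$ injective, which realises $\widehat X=JC/\langle\eta\rangle$ as an abelian subvariety of $J\tC$.

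Next I would compute the type of the restricted polarisation $\Xi:=j^*\Theta_{\tC}$ by pulling it back to $JC$. Since $j\circ\pi=f^*$, pulling $\Theta_{\tC}$ back along $f^*$ yields $\pi^*\Xi$, and using the autoduality identity $\widehat{f^*}\circ\phi_{\Theta_{\tC}}=\phi_{\Theta_C}\circ\Nm_f$ together with $\Nm_f\circ f^*=f_*f^*=[n]$ one finds $\phi_{\pi^*\Xi}=\widehat{f^*}\,\phi_{\Theta_{\tC}}\,f^*=\phi_{\Theta_C}\,[n]=\phi_{n\Theta_C}$. Thus $\pi^*\Xi\equiv n\Theta_C$ is of type $(n,\dots,n)$ with $K(\pi^*\Xi)=JC[n]$, on which the commutator pairing $e^{n\Theta_C}$ is the Weil pairing. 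Invoking descent of polarisations along $\pi$ (\cite{BL}): the cyclic group $\langle\eta\rangle\subset JC[n]$ is automatically isotropic, and $K(\Xi)=\langle\eta\rangle^{\perp}/\langle\eta\rangle$. As $\eta$ has exact order $n$, nondegeneracy of the Weil pairing produces $y$ with $e^{n\Theta_C}(\eta,y)$ a primitive $n$-th root of unity, so $\eta$ extends to a symplectic basis and $\langle\eta\rangle^{\perp}/\langle\eta\rangle\cong(\ZZ/n)^{2(g-1)}$; this group structure pins the type of $\Xi$ down to $(1,n,\dots,n)$.

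For the dual description I would use that $JC$ is principally polarised, hence self-dual, so dualising $\pi\colon JC\ra\widehat X$ produces an \'etale isogeny $\widehat\pi\colon X\ra JC$ of degree $n$ whose kernel is the Cartier dual of $\langle\eta\rangle$. Restricting $\widehat\pi$ over the Abel--Jacobi image $C\hra JC$ gives an \'etale degree-$n$ covering $\widehat\pi^{-1}(C)\ra C$, classified by a class in $H^1(C,\ZZ/n)$. Because $a_C^*\colon H^1(JC,\ZZ/n)\xrightarrow{\sim}H^1(C,\ZZ/n)$ is an isomorphism compatible with the self-duality identifying $JC[n]$ with $H^1(C,\ZZ/n)$, the classifying class of this covering is precisely $\eta$; by the correspondence of \cite{mu} it is therefore the covering $f\colon\tC\ra C$, so $\widehat\pi^{-1}(C)=\tC$ and $\widehat\pi|_{\tC}=f$.

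The embedding and the relation $\prod d_i=n^{g-1}$ (from $\deg\phi_{\pi^*\Xi}=(\deg\pi)^2\deg\phi_{\Xi}$) are formal, so the main obstacle is upgrading the latter to the exact type $(1,n,\dots,n)$: this is exactly where the isotropy of $\langle\eta\rangle$ and its completion to a symplectic basis in $(JC[n],e^{n\Theta_C})$ are essential. The second genuinely non-formal point is the identification in the dual picture, where one must verify that pulling $\widehat\pi$ back along the Abel--Jacobi map reproduces the cover defined by $\eta$ and not some other degree-$n$ cover; this rests on the compatibility of $a_C^*$ with the Weil-pairing self-duality of $JC[n]$.
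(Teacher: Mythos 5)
Your proposal is correct, and it follows essentially the same route as the source the paper relies on: the paper gives no argument of its own here, deferring entirely to \cite[Section 3]{mu}, and your three steps --- identifying $\Ker(f^*)=\langle\eta\rangle$ via $f_*\cO_{\tC}=\bigoplus L^{-i}$, computing $\phi_{\pi^*\Xi}=n\phi_{\Theta_C}$ from the norm map and descending along the isotropic subgroup $\langle\eta\rangle$ to pin down the type $(1,n,\ldots,n)$, and matching the classifying class of $\widehat{\pi}^{-1}(C)\ra C$ with $\eta$ --- are exactly the ingredients of that classical argument. I see no gaps.
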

\begin{proof}
(\cite[Section 3]{mu}).  
\end{proof}

Let $f:\tC\ra C$ be a double covering and $\sigma$ the involution exchanging the sheets of the covering; denote also by $\sigma$ the induced 
automorphism on $J\tC$.  The following Proposition is a well-known fact (\cite[Section 3]{mu}).
\begin{prop}\label{prop:doubra}
If $f:\tC\ra C$ is a branched double covering then $f^*:JC\ra J\tC$ is an embedding and the restricted polarisation is twice the principal one. 
Moreover,  the norm map of $JC$ is  $\Nm_{JC}=1+\sigma$. 
\end{prop}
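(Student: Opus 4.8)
The plan is to establish the three assertions in turn, using the two push--pull identities on divisor classes together with the one nonformal ingredient: that under the canonical principal polarisations the pushforward $\Nm_f=f_*:J\tC\ra JC$ is the dual of the pullback $f^*:JC\ra J\tC$, i.e. $\widehat{f^*}\,\phi_{\Theta_{\tC}}=\phi_{\Theta_C}\,f_*$. Writing $\sigma$ also for the deck involution on $\tC$, a point $q\in\tC$ over $p\in C$ satisfies $f^*p=q+\sigma q$ and $f_*q=p$, so on divisor classes $f^*f_*(q)=q+\sigma q$ and $f_*f^*(p)=2p$; since these are homomorphisms they extend to $\Pic^0$, giving
$$
f^*f_*=1+\sigma \ \text{ on } J\tC, \qquad f_*f^*=2 \ \text{ on } JC.
$$

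For the embedding I would first note that $f_*f^*=2$ forces $\ker f^*\subseteq JC[2]$, so $f^*$ is at worst an isogeny onto its image, which is automatically an abelian subvariety. To rule out a kernel I use that the covering is branched. Write $f_*\cO_{\tC}\cong\cO_C\oplus\eta^{-1}$ with $\eta^{\otimes2}\cong\cO_C(B)$, where $B$ is the branch divisor; since $f$ is branched $B\neq0$, hence $\deg\eta>0$. If $L\in JC$ has $f^*L\cong\cO_{\tC}$, the projection formula gives
$$
1=h^0(\tC,\cO_{\tC})=h^0(\tC,f^*L)=h^0(C,L)+h^0(C,L\otimes\eta^{-1}).
$$
As $\deg(L\otimes\eta^{-1})<0$ the second term vanishes, so $h^0(C,L)=1$; a degree-zero line bundle with a nonzero section is trivial, whence $L\cong\cO_C$ and $f^*$ is injective. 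This is exactly the point where branchedness enters: in the \'etale case $\deg\eta=0$ and $L=\eta$ lies in the kernel.

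The remaining two claims follow by a direct computation with the norm endomorphism of $\im f^*$ as defined in Section~2, taking $X=J\tC$ with its principal polarisation $\phi_{\Theta_{\tC}}$ and $i=f^*$ for the inclusion. Combining the duality with $f_*f^*=2$, the restricted polarisation satisfies
$$
\phi_{i^*\Theta_{\tC}}=\hat i\,\phi_{\Theta_{\tC}}\,i=\widehat{f^*}\,\phi_{\Theta_{\tC}}\,f^*=\phi_{\Theta_C}\,f_*f^*=2\,\phi_{\Theta_C},
$$
so its elementary divisors are all $2$; that is, the restricted polarisation is twice the principal one and its exponent is $e(\im f^*)=2$. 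Consequently $\psi_{i^*\Theta_{\tC}}=\phi_{\Theta_C}^{-1}$, and substituting into $\Nm_{JC}=i\,\psi_{i^*\Theta_{\tC}}\,\hat i\,\phi_{\Theta_{\tC}}$ yields
$$
\Nm_{JC}=f^*\,\phi_{\Theta_C}^{-1}\,\widehat{f^*}\,\phi_{\Theta_{\tC}}=f^*\,\phi_{\Theta_C}^{-1}\,\phi_{\Theta_C}\,f_*=f^*f_*=1+\sigma.
$$

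I expect the only genuine difficulty to be the duality $\widehat{f^*}\,\phi_{\Theta_{\tC}}=\phi_{\Theta_C}\,f_*$, since everything else reduces to the push--pull bookkeeping above and the cohomological injectivity argument. I would therefore either invoke this compatibility from the standard references on Jacobians, or, for a self-contained account, deduce it from the adjointness of $f^*$ and $f_*$ with respect to the intersection pairings defining the theta polarisations.
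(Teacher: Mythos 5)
Your proof is correct. The paper itself gives no argument for this proposition --- it is stated as a well-known fact with a citation to Mumford's \emph{Prym varieties I}, Section 3 --- so there is nothing to compare against beyond noting that your argument is the standard one: the push--pull identities $f_*f^*=2$ and $f^*f_*=1+\sigma$, the projection-formula computation with $f_*\cO_{\tC}=\cO_C\oplus\eta^{-1}$ showing $\ker f^*$ is trivial precisely because $\deg\eta>0$ in the branched case, and the duality $\widehat{f^*}\,\phi_{\Theta_{\tC}}=\phi_{\Theta_C}\,f_*$ feeding into the definition of the norm endomorphism from Section~2. All steps check out, including the identification $\psi_{i^*\Theta_{\tC}}=\phi_{\Theta_C}^{-1}$ from the exponent of $2\Theta_C$ being $2$.
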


If $\alpha$ is an automorphism of a curve $C$ we denote by $C_{\alpha}$ the quotient curve $C/\langle \alpha\rangle$.
Recall that for a hyperelliptic curve of genus $g\geq 2$, the set of Weierstrass points coincide with the set of fixed points of the hyperelliptic involution and is of order $2g+2$.
The following propositions deals with the case when the  curves are hyperelliptic.
\begin{prop}\label{prop:hypcov}
Let $f:\tC \ra C$ be an \'etale cyclic covering of degree $n$. If $\tC$ is hyperelliptic then $C$ is also hyperelliptic and $n=2$.
\end{prop}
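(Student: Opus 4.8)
The plan is to use the uniqueness of the hyperelliptic involution to propagate it across the covering, and then to read off $n$ from the ramification of the induced map of the rational quotients.

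Let $\iota$ be the hyperelliptic involution of $\tC$ and let $\sigma$ generate the cyclic deck group $\langle\sigma\rangle\cong\ZZ/n$ of $f$, acting freely on $\tC$ since $f$ is \'etale. First I would note that $\sigma\iota\sigma^{-1}$ is again an involution of $\tC$ whose quotient is $\cong\tC/\langle\iota\rangle$, hence rational; so by uniqueness of the hyperelliptic involution for $g(\tC)\geq 2$ it equals $\iota$. Thus $\sigma$ and $\iota$ commute, and $\iota$ descends to an involution $\bar\iota$ on $C=\tC/\langle\sigma\rangle$. This $\bar\iota$ is nontrivial: were $\bar\iota=\id$, then $\iota$ would preserve every $\sigma$-orbit, forcing $\iota=\sigma^k$ for some $k$; but a nontrivial power of the freely acting $\sigma$ has no fixed points, contradicting that $\iota$ fixes the $2g(\tC)+2$ Weierstrass points. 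Hence $\iota\notin\langle\sigma\rangle$ and $\bar\iota\neq\id$.

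Next, since $f\circ\iota=\bar\iota\circ f$, the composite $\tC\ra C\ra C/\langle\bar\iota\rangle$ is $\iota$-invariant and factors through a nonconstant morphism $\bar f\colon \tC/\langle\iota\rangle\ra C/\langle\bar\iota\rangle$. As $\tC$ is hyperelliptic, $\tC/\langle\iota\rangle\cong\PP^1$, so $C/\langle\bar\iota\rangle$ is dominated by $\PP^1$ and is therefore rational, i.e. $\cong\PP^1$. A short genus bound (from $g(\tC)-1=n(g(C)-1)$ one gets $g(C)\geq 2$) shows $\bar\iota$ is a genuine hyperelliptic involution, so $C$ is hyperelliptic. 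To pin down $n$, I would then study $\bar f\colon \PP^1\ra\PP^1$, a connected cyclic cover with group $\ZZ/n=\langle\sigma,\iota\rangle/\langle\iota\rangle$ generated by the image $\bar\sigma$ of $\sigma$. The key computation is that every ramification point of $\bar f$ has index exactly $2$: a point of $\PP^1=\tC/\langle\iota\rangle$ is an $\iota$-orbit $\{p,\iota p\}$, and a nontrivial stabiliser element $\bar\sigma^{\,k}$ must satisfy $\sigma^k p=\iota p$, whence $\sigma^{2k}p=p$ and $2k\equiv 0\pmod n$, so the stabiliser has order at most $2$. Since the local monodromies of a connected cover of $\PP^1$ generate the deck group and here each has order dividing $2$, the group $\ZZ/n$ is generated by elements of order $\leq 2$, forcing $n\mid 2$. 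For a nontrivial covering this gives $n=2$.

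I expect the genuine obstacle to be this last step: showing $C$ is hyperelliptic is essentially immediate once $\iota$ is known to descend, but extracting the exact value $n=2$ requires correctly analysing the ramification of the induced cover of rational curves. An alternative to the monodromy argument is to count Weierstrass points: one shows each Weierstrass point of $C$ has either all $n$ or none of its preimages among the Weierstrass points of $\tC$, which via $g(\tC)-1=n(g(C)-1)$ and the Weierstrass-point count yields $n\mid 4$, after which the connectedness of $\bar f$ eliminates $n=4$. The monodromy formulation seems cleaner, and I would adopt it.
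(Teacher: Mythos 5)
Your proof is correct, but it diverges from the paper's at both main steps. For the hyperellipticity of $C$, the paper counts fixed points: Hurwitz gives $|\Fix(\tilde\iota)|=2n(g-1)+4$ on $\tC$, the descended involution then has at least $2(g-1)+4/n$ fixed points on $C$, and feeding this into Hurwitz for $C\ra C/\langle\iota\rangle$ forces that quotient to have genus $0$. You instead observe that $C/\langle\bar\iota\rangle$ is dominated by $\tC/\langle\iota\rangle\cong\PP^1$ and hence is rational; this is shorter and avoids the bookkeeping, though the paper's count has the side benefit of locating the Weierstrass points of $\tC$ over those of $C$, information it reuses in the proof of Proposition \ref{farkas-lemma}. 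For $n=2$, the paper cites \cite[Proposition 1.2]{BL-Surfaces}: a lift of the hyperelliptic involution of $C$ together with $\sigma$ generates the dihedral group $D_n$, and $D_n$ is abelian only for $n\le 2$. Your argument --- stabilisers of the induced $\ZZ/n$-action on $\tC/\langle\iota\rangle$ have order at most $2$ by freeness of $\sigma$, and the local monodromies of the connected cover $\bar f$ of $\PP^1$ generate its cyclic deck group --- reaches the same conclusion self-containedly, and is in effect a direct proof of the special case of that citation which is actually needed. You are also more careful than the paper on two points it treats as obvious: that $\sigma$ commutes with the hyperelliptic involution (via uniqueness of the latter for $g(\tC)\ge 2$) and that the descended involution on $C$ is nontrivial.
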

\begin{proof}
Let $\sigma$ be the automorphism permuting the sheets of the covering and let $\tilde \iota$ be the hyperelliptic involution. Obviously $\tilde \iota$ 
commutes with $\sigma$ so it descends to an involution on $C$ denoted by $\iota $. Let $g=g(C)$ be the genus of $C$. By Hurwitz formula 
$g(\tC)=n(g-1)+1$, hence $|\Fix(\tilde \iota)|= 2n(g-1)+4$. Now, $|\Fix(\iota)| \geq \frac{|\Fix(\tilde \iota)|}{n}$ because 
$\tilde \iota$ descends to $\iota$. 
Since $|\Fix(\iota) |\geq 2(g-1)+\frac{4}{n}$, we can write $|\Fix(\iota)|= 2(g-1)+b$, for some positive integer $b$. 
Let us consider the quotient curve $C_\iota$.  By  Hurwitz formula we have:
$$
2g-2=2(2g(C_{\iota})-2)+|\Fix(\iota)|=2(2g(C_\iota)-2)+2g-2+b.
$$
Since $b >0$ the only possibility is that $g(C_\iota)=0$, hence $b=4$ and $C$ is hyperelliptic with the hyperelliptic involution $\iota$.
The last part follows from the fact that a lift of $\iota$ and $\sigma $ generate the dihedral group $D_n$  \cite[Proposition 1.2]{BL-Surfaces} 
and the only such a group that is abelian  is $D_2=\ZZ_2\times \ZZ_2$. 
\end{proof}

\begin{prop}\label{prop:douhyp}
Let $f:C\ra C_{\sigma}$ be a double covering and $\sigma$ be the involution exchanging the sheets of the covering. Then
\begin{itemize}
\item[(a)] If $f$ is \'etale and $C_{\sigma}$ is hyperelliptic with the hyperelliptic involution $\iota$, then $\iota$ lifts to an involution on 
$C$ denoted by $\tilde\iota$
and $P(C/C_{\sigma})$ splits as product of Jacobians $JC_{\tilde\iota}\times JC_{\tilde\iota\sigma}$.
\item[(b)] If $C$ is hyperelliptic with the hyperelliptic involution $\iota$, then $P(C/C_{\sigma})$ is the image of a Jacobian $JC_{\iota\sigma}$.
\end{itemize}
\end{prop}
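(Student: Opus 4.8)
The plan is to reduce both parts to the idempotent calculus in $\End_\QQ(JC)$ attached to the Klein four-group generated by the involutions at hand, exploiting two facts: the hyperelliptic involution of a curve acts as $-1$ on its Jacobian, and the quotient of a hyperelliptic curve by its hyperelliptic involution is $\PP^1$. I would dispose of part (b) first, as it is immediate. Since $\iota$ is the hyperelliptic involution of $C$ it is central in $\Aut(C)$, hence commutes with $\sigma$, so $\iota\sigma$ is again an involution and $h\colon C\ra C_{\iota\sigma}$ is a double covering with image $h^*JC_{\iota\sigma}=\im(1+\iota\sigma)$ in $JC$. The key point is that $\iota$ acts as $-1$ on $JC$, so that $1+\iota\sigma=1-\sigma$ as endomorphisms. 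On the other hand $f\colon C\ra C_\sigma$ has $\Nm_f=1+\sigma$ by Proposition \ref{prop:doubra} and $f^*JC_\sigma=\im(1+\sigma)$, so its Prym, being the complementary subvariety, is $\im(1-\sigma)$. Combining the two identities gives $P(C/C_\sigma)=\im(1-\sigma)=\im(1+\iota\sigma)=h^*JC_{\iota\sigma}$, which is exactly the assertion.

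For part (a) the first and, I expect, the most delicate step is the lifting statement. Since $f$ is \'etale it corresponds to a $2$-torsion class $\eta\in JC_\sigma[2]$, and $\iota$ lifts to $C$ precisely because $\iota^*\eta=-\eta=\eta$. To see that one of the two lifts $\tilde\iota,\tilde\iota\sigma$ is an involution, I would look at a Weierstrass point $W$ of $C_\sigma$: its fibre under $f$ consists of two points interchanged by $\sigma$, and any lift of $\iota$ preserves this fibre, either fixing or swapping the two points. Since $f$ is \'etale, $\sigma$ is fixed-point free, so the lift that fixes the fibre has a fixed point and cannot square to $\sigma$; hence it squares to the identity. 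As $\tilde\iota$ normalises $\langle\sigma\rangle$ it commutes with $\sigma$, so $(\tilde\iota\sigma)^2=\tilde\iota^2$, and therefore both lifts are involutions. Renaming, $\tilde\iota$ is an involution and $G:=\langle\sigma,\tilde\iota\rangle\cong\ZZ_2\times\ZZ_2$.

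With the Klein four-group $G$ in hand I would then run the standard isotypic decomposition of $JC$. The essential input is that $C/G=(C_\sigma)_\iota\cong\PP^1$, so $J(C/G)=0$ and the trivial-character idempotent $e_0=\tfrac14(1+\sigma)(1+\tilde\iota)$ kills $JC$. Identifying the three nontrivial idempotents yields $\im(e_1)=JC_\sigma$, $\im(e_2)=JC_{\tilde\iota}$ and $\im(e_3)=JC_{\tilde\iota\sigma}$, which amounts to the Kani--Rosen/Accola relation $JC\sim JC_\sigma\times JC_{\tilde\iota}\times JC_{\tilde\iota\sigma}$. Since the Prym idempotent $\tfrac12(1-\sigma)$ equals $e_2+e_3$ with $e_2,e_3$ orthogonal, I conclude $P(C/C_\sigma)=\im(e_2)\times\im(e_3)=JC_{\tilde\iota}\times JC_{\tilde\iota\sigma}$. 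The one point that still requires care is checking that the restricted polarisations make this a genuine product of Jacobians rather than merely an isogeny, which I would extract from the orthogonality of the idempotents together with the fact that the two factors are complementary subvarieties of the Prym.
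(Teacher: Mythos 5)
Your part (b) is essentially the paper's own argument: the identity $1+\iota\sigma=1-\sigma$ on $JC$ (because the hyperelliptic involution acts as $-1$), combined with $\Nm_{f^*JC_\sigma}=1+\sigma$, exhibits $\im JC_{\iota\sigma}$ and $f^*JC_\sigma$ as complementary subvarieties; nothing to add there. For part (a) the paper gives no proof at all --- it simply cites Mumford \cite{mu} --- so here you genuinely do something different by supplying one. Your lifting step is correct and cleanly argued: $\iota^*\eta=-\eta=\eta$ guarantees a lift, the fibre over a Weierstrass point forces one of the two lifts to have a fixed point (since $\sigma$ is fixed-point free, that lift cannot square to $\sigma$), and commutation with $\sigma$ then makes both lifts involutions. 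The Klein-group idempotent computation is also sound: $C/\langle\sigma,\tilde\iota\rangle=(C_\sigma)_\iota\cong\PP^1$ kills the trivial idempotent, so the three subgroup idempotents $\tfrac12(1+\sigma)$, $\tfrac12(1+\tilde\iota)$, $\tfrac12(1+\tilde\iota\sigma)$ are pairwise orthogonal and sum to $1$, and $\tfrac12(1-\sigma)=\tfrac12(1+\tilde\iota)+\tfrac12(1+\tilde\iota\sigma)$. This yields $P(C/C_\sigma)=\im JC_{\tilde\iota}\boxplus\im JC_{\tilde\iota\sigma}$ in the notation the paper introduces in Section 5, which is all that the later applications actually use.

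The one genuine gap is exactly the point you flag at the end, and the justification you propose does not close it. Orthogonality of the idempotents together with complementarity only shows that the addition map $JC_{\tilde\iota}\times JC_{\tilde\iota\sigma}\ra P(C/C_\sigma)$ is an isogeny whose kernel is the antidiagonal of $\im JC_{\tilde\iota}\cap\im JC_{\tilde\iota\sigma}$; a short computation (if $x=(1-\sigma)z$ and $\sigma x=x$ then $2x=0$) places this intersection inside $P(C/C_\sigma)[2]$, but nothing formal forces it to vanish --- for a general pair of complementary subvarieties it does not. Upgrading the isogeny to an actual product of Jacobians requires the geometric input on how the $2g+2$ Weierstrass fibres distribute between $\Fix(\tilde\iota)$ and $\Fix(\tilde\iota\sigma)$ and the resulting identification of the relevant $2$-torsion; that is the content of Mumford's treatment of the hyperelliptic case in \cite{mu}, and it is presumably why the paper defers to that reference rather than arguing as you do. If you only need the statement at the level of complementary subvarieties (as the paper does in Corollary \ref{cor:jacdiv} and Theorem \ref{thm:jacc_a}), your proof is complete; if you want the literal product, you must either prove the intersection is trivial or cite it.
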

\begin{proof}
Part $(a)$ can be found in \cite{mu}.
As for the second part, let $M$ be the image of $JC_{\sigma}$ in $JC$. By construction, the norm map of $M$ is given by $\Nm_M=1+\sigma$.

On the other hand if $\iota$ is the hyperelliptic involution on $C$ then $\iota\sigma$ is another involution on $C$.
If $N$ is the image of $JC_{\iota\sigma}$ in $JC$ then $\Nm_N=1+\iota\sigma$ and since $\iota$ extends to $JC$ as $(-1)$, 
we get that $\Nm_N=1 -\sigma$. Hence $\Nm_N+\Nm_M=2_{JC}$ so $(M,N)$ are complementary abelian subvarieties. 

Hence, by definition of Prym, we get that $\im(JC_{\iota\sigma})=N=P(C/C_{\sigma})$.
\end{proof}

\begin{rem}
If both $C$ and $C_{\alpha}$ are hyperelliptic and we choose the lift of the hyperelliptic involution to be hyperelliptic then part $(a)$ and $(b)$ of 
Proposition \ref{prop:douhyp} coincide, since $J\PP^1=0$.
\end{rem}

\subsection{Motivating computation}
Let $A$ be an abelian surface and $C$ be a smooth hyperelliptic curve of genus $g>1$.
Let $f_C\colon C\hookrightarrow A$ be an embedding. Without lose of generality, we can assume that $f_C(p)=0$ for some Weierstrass point $p\in C$. Using the Universal 
Property of Jacobians together with the Abel map $\alpha_p$, we have the following diagram:
\begin{equation*}
  \begin{diagram}
\label{Diag1}
\dgARROWLENGTH=2.5em
    \node{C}\arrow[4]{e,t}{f_C}\arrow[2]{se,r}{\alpha_p}
    \node[4]{A}\\[2]\node[3]{JC}
\arrow [2]{ne,r}{f}
  \end{diagram}
\end{equation*}
where $f$ is the extension of $f_C$ to $JC$. Let $\iota$ be the hyperelliptic involution on $C$. Its extension to $JC$ is $-1$, so $\alpha_p(C)$ 
is a symmetric curve, i.e. such that $\alpha_p(C)=-\alpha_p(C)$. 

\begin{lem}\label{lem:WcapA2}
Let $W$ be the set of Weierstrass points on $C$. Then $f_C(W)=f_C(C)\cap A[2]$.
\end{lem}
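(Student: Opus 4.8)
The plan is to reduce the two-torsion condition on the image to the fixed-point condition for the hyperelliptic involution, and then to read off the set-theoretic equality using the injectivity of the embedding $f_C$. The engine of the whole argument is the identity
\[
f_C\circ\iota=-f_C,
\]
which I would establish first. Writing $f_C=f\circ\alpha_p$, where $f\colon JC\to A$ is a group homomorphism, and using the relation $\alpha_p\circ\iota=-\alpha_p$ (the already-recorded fact that $\iota$ extends to $-1$ on $JC$ because the base point $p$ is a Weierstrass point), one obtains $f_C\circ\iota=f\circ(-\alpha_p)=-(f\circ\alpha_p)=-f_C$, since a homomorphism of abelian varieties commutes with multiplication by $-1$. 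It is essential here that $p$ was chosen to be a Weierstrass point, so that the involution induced on $JC$ is exactly $-1$ rather than a translate of it; this is precisely the normalisation fixed before the statement, which also makes $f_C(C)$ symmetric.

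With this relation in hand I would argue pointwise. For $x\in C$ the following chain of equivalences holds:
\[
f_C(x)\in A[2]\iff f_C(x)=-f_C(x)\iff f_C(x)=f_C(\iota(x))\iff x=\iota(x)\iff x\in W.
\]
The first equivalence is the characterisation of $A[2]$ as the fixed locus of $-1$; the second substitutes $-f_C=f_C\circ\iota$; the third is the injectivity of the embedding $f_C$; and the last is the fact, recalled in the preliminaries, that for a hyperelliptic curve of genus $g\geq 2$ the Weierstrass points are exactly the fixed points of $\iota$. Reading the chain from the ends inward gives $f_C^{-1}\bigl(f_C(C)\cap A[2]\bigr)=W$, which is the asserted equality $f_C(W)=f_C(C)\cap A[2]$.

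I do not expect a genuine obstacle here beyond careful bookkeeping. The two points worth verifying explicitly are that $f$, being a homomorphism, transports the symmetry of $\alpha_p(C)$ to the symmetry of $f_C(C)$, and that $f_C$ is injective as a map of sets, which is immediate since it is an embedding of $C$ into $A$. Everything else in the argument is formal, so the proof reduces to recording the symmetry relation and running the displayed equivalences.
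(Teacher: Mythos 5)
Your proof is correct and follows essentially the same route as the paper's: both reduce the statement to the chain of equivalences "image is $2$-torsion $\iff$ image is fixed by $-1$ $\iff$ the point is fixed by $\iota$ $\iff$ the point is a Weierstrass point," using injectivity of the embedding and the fact that $\iota$ induces $-1$ on $JC$ for a Weierstrass base point. The only cosmetic difference is that you run the equivalences directly on $A$ via $f_C$, whereas the paper first transfers the statement to $JC$ via $\alpha_p$ (using $f(JC[2])=A[2]$), a step your version does not even need.
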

\begin{proof}
First note that $f(JC[2]) = A[2]$. Since $f|_{\alpha_p(C)}$ is injective the statement is equivalent to the equality  $\alpha_p(W) = \alpha_p(C) \cap JC[2]$.
For $q \in C$,
$$
\alpha_p(q) \in JC[2] \ \Leftrightarrow \ -\alpha_p(q) =\alpha_p(\iota q) =  \alpha_p(q) \ \Leftrightarrow \ \iota q=q \ \Leftrightarrow \ q \in W. 
$$

\end{proof}

From now on, we identify $C$ with its image in $JC$ as well as with its image in $A$.
Since $f$ is a homomorphism,  $-f(C)=f(-C)=f(C)$ and the curve $C$ on $A$ defines a symmetric divisor. 
Since $g(C)=g>1$, we have that $\cO_A(f_C(C))$ is a polarising line bundle of some type $(d_1,d_2)$ with $g=1+d_1d_2$. 

For any divisor $D$ on $A$, let
\begin{align*}
A^+_2(D)&=\{a\in A[2]\colon \mult_a(D)\equiv 0 \mod 2\},\\
A^-_2(D)&=\{a\in A[2]\colon \mult_a(D)\equiv 1 \mod 2\}.
\end{align*}

The following proposition is a  simplified version of \cite[Proposition 4.7.5]{BL}.
\begin{prop}\label{prop:BL475}
Let $D$ be an ample symmetric divisor on an abelian surface $A$ and $L=\cO_A(D)$. Suppose $L$ is of type $(d_1,d_2)$ and 
$s=|\{i\colon d_i\in 2\NN+1\}|$ is a number of odd $d_i$'s. 
Then $$
|A^-_2(D)|=8+2^{3-s}\ \text{ or }\ |A^-_2(D)|=8-2^{3-s}\ \text{ or }\ |A^-_2(D)|=8.
$$\qed
\end{prop}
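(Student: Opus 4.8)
The plan is to reduce the statement to a counting problem for a quadratic form on the $\FF_2$-vector space $A[2]\cong\FF_2^4$. Since $D$ is symmetric, $L=\cO_A(D)$ carries a canonical normalisation of the isomorphism $(-1)^*L\cong L$, which produces Mumford's sign function $e_*^L\colon A[2]\ra\{\pm1\}$ (see \cite[Section 4.7]{BL}). Writing $e_*^L=(-1)^{q}$ defines a map $q\colon A[2]\ra\FF_2$ with $q(0)=0$, and the defining relation $e_*^L(x+y)=e_*^L(x)e_*^L(y)e^L(x,y)$ translates into $q(x+y)+q(x)+q(y)=b(x,y)$, where $e^L|_{A[2]}=(-1)^{b}$. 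Thus $q$ is a genuine quadratic form whose associated symplectic form $b$ is the reduction mod $2$ of the Weil pairing attached to $L$. The input I would take from \cite[Proposition 4.7.5]{BL} is the congruence $\mult_a(D)\equiv \mult_0(D)+q(a)\pmod 2$ for every $a\in A[2]$; it identifies $A_2^-(D)$ with the fibre $q^{-1}(1)$ or $q^{-1}(0)$ according to the parity of $\mult_0(D)$. In either case $|A_2^-(D)|\in\{|q^{-1}(0)|,|q^{-1}(1)|\}$, so it suffices to compute the two fibre cardinalities of $q$.

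Next I would pin down the rank of $b$ in terms of the type. Choosing a symplectic basis of $A[2]$ adapted to the elementary divisors, the pairing $b$ splits as an orthogonal sum of $g$ blocks, one for each $d_i$: an odd $d_i$ contributes a hyperbolic plane, whereas an even $d_i$ lies entirely in the radical. Hence the radical $R=\mathrm{rad}(b)$ has dimension $2(g-s)=4-2s$ and $b$ has rank $2s$. This is the only place where the hypothesis on the parities of the $d_i$ enters, and it is a routine computation from the standard shape of $e^L$ in an adapted basis.

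Then I would carry out the count. Restricted to $R$ the form $q$ is additive (because $b$ vanishes there), so $q|_R=\lambda$ is a linear functional with $\lambda(0)=0$, and two cases arise. \emph{If $\lambda\neq 0$}, then on each coset $x+R$ one has $q(x+\rho)=q(x)+\lambda(\rho)$, so $q$ takes each value of $\FF_2$ exactly $|R|/2$ times on that coset; summing over the cosets gives $|q^{-1}(0)|=|q^{-1}(1)|=2^{3}=8$, whence $|A_2^-(D)|=8$. \emph{If $\lambda=0$}, then $q$ descends to a nondegenerate quadratic form $\bar q$ on $A[2]/R\cong\FF_2^{2s}$, and $|q^{-1}(\epsilon)|=|R|\cdot|\bar q^{-1}(\epsilon)|$. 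The classical count $|\bar q^{-1}(0)|=2^{2s-1}+\delta\,2^{s-1}$ and $|\bar q^{-1}(1)|=2^{2s-1}-\delta\,2^{s-1}$, with $\delta=\pm1$ the sign of the Arf invariant, together with $|R|=2^{4-2s}$, yields $|q^{-1}(1)|=8-\delta\,2^{3-s}$ and $|q^{-1}(0)|=8+\delta\,2^{3-s}$. Therefore in this case $|A_2^-(D)|\in\{8-2^{3-s},\,8+2^{3-s}\}$, and combining the two cases gives exactly the three asserted values.

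The genuinely delicate point is not the final arithmetic but correctly importing the $e_*^L$-formalism: establishing the congruence $\mult_a(D)\equiv\mult_0(D)+q(a)$, the normalisation $q(0)=0$, and the fact that the polarisation of $q$ is precisely the mod-$2$ Weil pairing, together with the bookkeeping of the radical's dimension. I would stress that the appearance of the extra value $8$ is the subtle feature: it comes entirely from the case where $q$ restricts to a \emph{nonzero} linear form on $R$, a possibility that is invisible unless one keeps track of the fact that $b$ may be degenerate whenever some $d_i$ is even.
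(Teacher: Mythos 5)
Your argument is correct, but it is worth saying up front that the paper does not prove this proposition at all: it is stated with a \qed as ``a simplified version of \cite[Proposition 4.7.5]{BL}'', so the comparison here is really between your proof and the one in Birkenhake--Lange. What you have written is essentially a self-contained reconstruction of that proof, specialised to $g=2$: the identification $A_2^-(D)=q^{-1}(\epsilon)$ with $\epsilon=\mult_0(D)\bmod 2$, the computation $\dim\mathrm{rad}(b)=2(g-s)$ from the shape of the alternating form $E\equiv\begin{smallmatrix}0&D\\D&0\end{smallmatrix}\bmod 2$, and the dichotomy according to whether $q$ vanishes on the radical, with the Arf-invariant count $2^{2s-1}\pm 2^{s-1}$ in the nondegenerate case. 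The arithmetic checks out ($2^{4-2s}(2^{2s-1}\pm 2^{s-1})=8\pm 2^{3-s}$), the edge cases $s=0$ and $s=2$ behave as they should (for $s=2$ the radical is trivial, so the value $8$ never actually occurs and the statement is simply not tight there), and your observation that the middle value $8$ arises exactly when $q$ restricts to a nonzero linear form on the radical is the genuine content of the ``or $8$'' clause. Two small corrections on sourcing: the congruence $\mult_a(D)\equiv\mult_0(D)+q(a)\pmod 2$ that you import is \cite[Proposition 4.7.2]{BL} rather than 4.7.5 (4.7.5 is the counting statement you are reproving), and the relation $q(x+y)+q(x)+q(y)=b(x,y)$ with $b$ the mod-$2$ reduction of $E$ is \cite[Lemma 4.7.4]{BL}; with those references in place your write-up is a complete proof, and arguably more informative than the bare citation the paper gives, since the paper later uses (in Lemma \ref{lem:baspr}(b)) the finer fact that for characteristic $0$ and $s=1$ the relevant fibre has exactly $3\cdot 4=12$ elements, which drops out of your case analysis.
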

Since $C$ is smooth and $f_C$ is an embedding we have $\mult_af(C)\in\{0,1\}$ for any $a\in A$. 
The following theorem was the starting point of our research.
\begin{thm} \label{poss-genera}
Let $C$ be a smooth hyperelliptic curve embedded in an abelian surface $A$. Then its genus $g(C)\in\{2,3,4,5\}$ and $A$ is polarised of type $(1,g(C)-1)$.
\end{thm}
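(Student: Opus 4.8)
The plan is to count the two-torsion points lying on $C$ in two different ways and match the answers. First I would record that, since $f_C$ is an embedding and $C$ is smooth, the divisor $D=f_C(C)$ has $\mult_a(D)\in\{0,1\}$ for every $a\in A$. Consequently a two-torsion point contributes to $A^-_2(D)$ precisely when it lies on $C$, so that $A^-_2(D)=f_C(C)\cap A[2]$ while $A^+_2(D)$ collects the remaining two-torsion points. By Lemma \ref{lem:WcapA2} the intersection $f_C(C)\cap A[2]$ equals $f_C(W)$, and since $f_C$ is injective this set has exactly $|W|=2g+2$ elements. Writing $g=1+d_1d_2$, this yields the clean count
$$
|A^-_2(D)|=2g+2=2d_1d_2+4.
$$

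Next I would feed this value into Proposition \ref{prop:BL475}. The divisor $D$ is ample and symmetric, so with $s$ the number of odd entries among $d_1,d_2$ we must have $|A^-_2(D)|\in\{8-2^{3-s},\,8,\,8+2^{3-s}\}$. Equating with $2d_1d_2+4$ gives $d_1d_2\in\{2-2^{2-s},\,2,\,2+2^{2-s}\}$, and it remains only to run through the three possible values of $s$.

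The heart of the argument is then a short parity-and-divisibility analysis, constrained by $d_1\mid d_2$. If $s=2$ then $d_1d_2$ is odd and lies in $\{1,2,3\}$, forcing $(d_1,d_2)=(1,1)$ or $(1,3)$, i.e. $g\in\{2,4\}$. If $s=1$ then $d_1d_2\in\{0,2,4\}$ with exactly one factor odd, leaving $(1,2)$ and $(1,4)$, i.e. $g\in\{3,5\}$; here I would note that the type $(2,2)$ is excluded precisely because it has $s=0$, not $s=1$. Finally, if $s=0$ then both $d_i$ are even, so $4\mid d_1d_2$, whereas the admissible values $\{-2,2,6\}$ are never positive multiples of $4$, ruling this case out entirely. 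Collecting the survivors shows $(d_1,d_2)=(1,g-1)$ with $g\in\{2,3,4,5\}$, which is exactly the claim.

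I do not expect any conceptual obstacle: the only place requiring care is the bookkeeping in the last paragraph, where one must consistently use both the parity of $d_1d_2$ dictated by $s$ and the relation $d_1\mid d_2$ to discard the spurious solutions (for instance, to see that $d_1d_2=2$ is incompatible with $s=2$, or that $d_1d_2=6$ is incompatible with $s=0$). The substantive input is entirely contained in Lemma \ref{lem:WcapA2} and Proposition \ref{prop:BL475}; everything else reduces to elementary arithmetic.
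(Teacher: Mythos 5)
Your proposal is correct and follows essentially the same route as the paper: both identify $A^-_2(C)$ with the image of the Weierstrass points via Lemma \ref{lem:WcapA2}, so that $|A^-_2(C)|=2g+2=2d_1d_2+4$, and then compare with the values allowed by Proposition \ref{prop:BL475}. Your case analysis over $s$ is merely a more systematic write-up of the paper's "the cases $g(C)=2,3,4$ are analogous" and its exclusion of $g=6,7$; the arithmetic checks out.
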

\begin{proof}
By Lemma \ref{lem:WcapA2}, the set of Weierstrass points is contained in the set of 2-torsion points and is equal to $A^-_2(C)$. Hence $2g(C)+2
\leq 16 =|A[2]|$, so $g(C)\leq7$. It remains to exclude two cases, $g(C)=6$ and $g(C)=7$. In the former case, we have that $|A^-_2(C)|=14$ which
is a contradiction with Proposition \ref{prop:BL475}. For $g(C)=7$ the only possible option is $d_1=1, d_2=6$, so $s=1$ and we also have a contradiction.

For $g(C)=5$, we get $|A^-_2(C)|=12$, so $s=1$ and therefore the only possible type is $(1,4)$. The cases $g(C)=2,3,4$ are analogous.
\end{proof}

Theorem \ref{poss-genera} gives a necessary condition for the genus of $C$. We would like to show that they are sufficient.
For $g(C)=2$ the curve $C$ defines a principal polarisation in its Jacobian. For genus 3, if $f: \tC \ra C$ is an \'etale double covering of a genus 2 curve,
then $g(\tC)=3$ and one has an embedding of $\tC$ in the abelian surface $\widehat{f^*(JC)}$ giving a $(1,2)$ polarisation (see Proposition \ref{prop:cyccov}).
For $g(C)=4$ there is a construction of so called $(1,3)$-theta divisors (see \cite{BS}). It is almost the same as the construction of the curve $C_A$ in the next
section, but it is important to note that constructing $(1,3)$-theta divisors has been technically easier, mostly due to the fact that 2 and 3 are coprime. 

In the following sections we will show two independent constructions that will prove the existence of genus 5 hyperelliptic curves on $(1,4)$ polarised 
abelian surfaces.

\section{Hyperelliptic curves on an abelian surface}

Let $A$ be a $(1,4)$ polarised abelian surface. Fix an isomorphism of $A$ with the abelian surface given by the period matrix $Z\in \ch_2$, 
where $\ch_2$ denotes the Siegel upper half-space. With respect to a standard decomposition $\CC^2 = Z\RR^2 + \RR^2$ there
exists a unique polarising line bundle $L_0$ of  characteristic~$0$ on~$A$. By \cite[Remark  8.5.3]{BL} the space of its global
sections can be identified with a space of classical theta functions with basis
$$
\Th{0}{0}, \  \Th{\omega}{0},\  \Th{2\omega}{0}, \  \Th{3\omega}{0},
$$ 
where $\omega=(0, \frac{1}{4})$. As the
line bundle is symmetric, the $(-1)$ action extends to $H^0(L_0)$. Using \cite[Lemma 8.5.2]{BL} and the fact that the characteristic is 0, we translate Inverse 
Formula \cite[Formula 4.6.4]{BL} to classical theta functions, so $(-1)$
acts on the basis by:
$$
(-1)^*\Th{0}{0}=\Th{0}{0},\quad (-1)^*\Th{\omega}{0}=
\Th{3\omega}{0},\quad
(-1)^*\Th{2\omega}{0}=\Th{2\omega}{0},\quad (-1)^*\Th{3\omega}{0}=
\Th{\omega}{0}.
$$
According to \cite[Corollary 4.6.6]{BL} the dimension of the space of theta functions invariant under the action of $(-1)$ is 3 and the dimension 
of the anti-invariant space is 1.  Thus,
up to a constant, there exists a unique odd theta function ($(-1)$-anti-invariant) in $H^0(L_0)$, denoted by $\theta_A$, namely
$$
\theta_A=\Th{3\omega}{0}-\Th{\omega}{0}.
$$
\begin{defn}
Let  ${C}_A=(\theta_A=0)$ be the zero locus of the theta function  $\theta_A$.
\end{defn}
The fact that, up to translation, the curve $C_A$ does not depend on the choice of a period matrix and on characteristic of the line bundle will follow from Theorem \ref{C_A}.
 
The next lemma states some basic properties of the curve $C_A$.
\begin{lem}\label{lem:baspr} 
Let $A$ be a $(1, 4)$ polarised surface and $C_A$ be a curve constructed as above. Then:
\begin{itemize}
\item[(a)] $C_A$ is of arithmetic genus $5$.
\item[(b)] $C_A$ passes through at least twelve $2$-torsion points on $A$.
\item[(c)] If $A$ is a general abelian surface then the curve $C_A$ is smooth. 
\item[(d)] If $C_A$ is smooth then it is a double cover of $\PP^1$
  branched along $12$ points, i.e.\ it is a hyperelliptic curve.
\end{itemize}
\end{lem}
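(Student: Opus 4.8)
The plan is to establish (a)--(d) in order, exploiting throughout that $C_A$ is a \emph{symmetric} divisor in the class of the type $(1,4)$ line bundle $L_0$, since $\theta_A$ is anti-invariant under $(-1)^*$. For (a) I would argue by adjunction: as $C_A$ lies in the linear system $|L_0|$ with $L_0$ of type $(1,4)$, Riemann--Roch on $A$ gives $\tfrac12(C_A^2)=\chi(L_0)=4$, so $(C_A^2)=8$. The canonical bundle of an abelian surface being trivial, adjunction reads $2p_a(C_A)-2=(C_A^2)+(C_A\cdot K_A)=8$, whence $p_a(C_A)=5$.

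For (b) the decisive input is the oddness of $\theta_A$. Because $(-1)^*\theta_A=-\theta_A$, the map $(-1)$ restricts to an involution of $C_A$ whose fixed locus is $C_A\cap A[2]$, and the $2$-torsion points through which $C_A$ passes with odd multiplicity are exactly $A^-_2(C_A)$. Since $L_0$ has type $(1,4)$ we have $s=1$ in Proposition \ref{prop:BL475}, so $|A^-_2(C_A)|\in\{4,8,12\}$. To reach the lower bound I would evaluate $\theta_A$ at the sixteen $2$-torsion points, written as half-periods: using $(-1)^*\Th{3\omega}{0}=\Th{\omega}{0}$ together with the quasi-periodicity of the classical theta functions, at a $2$-torsion point $a=-a$ one obtains $\Th{\omega}{0}(a)=\epsilon(a)\,\Th{3\omega}{0}(a)$ with $\epsilon(a)\in\{\pm1\}$, so that $\theta_A(a)=(1-\epsilon(a))\,\Th{3\omega}{0}(a)$ vanishes whenever $\epsilon(a)=+1$. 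A sign count then exhibits (at least) twelve such points, giving $|A^-_2(C_A)|\geq 12$; note that $0\in C_A$ is automatic, since an odd function vanishes at the origin.

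For (c) I would use that smoothness is an open condition in the family. The section $\theta_A$ depends holomorphically on the period matrix $Z\in\ch_2$, so the set of $Z$ for which $C_A$ is singular---cut out by the simultaneous vanishing of $\theta_A$ and its first partials---is closed in $\ch_2$. It therefore suffices to produce one $(1,4)$-surface carrying a smooth $C_A$; this is exactly what the degeneration to a product of two elliptic curves used for Theorem \ref{C_A} supplies, and smoothness then holds for general $A$.

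For (d) assume $C_A$ is smooth, so by (a) it has genus $5$. The involution $\iota:=(-1)|_{C_A}$ has fixed locus $C_A\cap A[2]$; since smoothness forces multiplicity one at each such point, the trichotomy of Proposition \ref{prop:BL475} together with the bound $|A^-_2(C_A)|\geq 12$ from (b) pins the number of fixed points to exactly $12$. Hurwitz for the degree-two map $C_A\to C_A/\langle\iota\rangle$ then reads $2\cdot 5-2=2(2g(C_A/\langle\iota\rangle)-2)+12$, forcing $g(C_A/\langle\iota\rangle)=0$. Thus $C_A/\langle\iota\rangle\cong\PP^1$ and $C_A\to\PP^1$ is a double cover branched at these $12$ points, i.e.\ $C_A$ is hyperelliptic. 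I expect the only genuinely delicate step to be the sign bookkeeping in (b): it is the explicit evaluation of $\theta_A$ at the $2$-torsion points that both yields $|A^-_2(C_A)|\geq 12$ and, via the trichotomy, supplies the exact count of $12$ branch points needed in (d).
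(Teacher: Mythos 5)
Parts (a) and (d) of your proposal coincide with the paper's argument: adjunction plus Riemann--Roch for the genus, and then Hurwitz for the degree-two quotient by $(-1)$ (the paper phrases this quotient as the projection to the Kummer surface $A/(-1)$, which is the same involution restricted to $C_A$). For (b) you take the second of the two routes the paper itself indicates: evaluating $\theta_A$ at the half-periods and counting signs is precisely the computation of the rank-one quadratic form $q_{L_0}$ that the paper extracts from the proof of \cite[Proposition 4.7.5]{BL}, and which gives $|q_{L_0}^{-1}(1)|=3\cdot 4=12$ exactly. You assert the sign count rather than perform it, and you yourself flag it as the delicate step, but the structure is sound.

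The genuine gap is in (c). Your openness argument requires exhibiting one $(1,4)$ polarised surface on which $C_A$ is smooth, and the example you point to --- the degeneration to a product $E\times F$ of elliptic curves --- does not supply one. The explicit computation in \S\ref{sec:expcom} and Lemma \ref{lem:4cop} show that in the product case the zero locus of $\theta_A$ is the \emph{reducible} nodal curve $E_0\cup F_1\cup\dots\cup F_4$, a union of one fibre of $\pi_E$ and four fibres of $\pi_F$; that degeneration is used in the paper to prove uniqueness (Theorem \ref{C_A}), not smoothness. As it stands your argument only shows that the locus of period matrices with singular $C_A$ is closed in $\ch_2$; you have not produced a point outside it, and the one point you name lies inside it. The paper closes this step by invoking Andreotti--Mayer theory (\cite[Prop 6]{AM}, \cite[Ch.~6.4]{ACGH}), which bounds the locus of period matrices whose theta divisor is singular; some input of this kind (or a genuinely smooth explicit example) is needed to complete your proof of (c).
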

\begin{proof}
Applying the adjunction formula  and Riemann-Roch one obtains
$$
2g_a(C_A) - 2 = C^2 = L^2= 2h^0(L) =8
$$
and hence
$$
g_a(C_A) =\frac{1}{2}(8 + 2) = 5,
$$
which proves (a). Part (b) is a consequence of the proof of \cite[Proposition 4.7.5]{BL}. Using the notation from the proof, one immediately see that 
$C_A$ is an odd divisor, hence $q_{L_0}^{-1}(1)=A_2^-(C_A)$. Moreover,  $L$ is of characteristic 0 and $q_{L_0}$ is of rank 1, so 
$|q_{L_0}^{-1}(1)|=3\cdot4=12$. 

Another way to prove part (b) is to compute explicitly $A_2^-(C_A)$ in the product case (see Remark \ref{rem:12points} for details) and use the fact that
 classical theta functions are continuous on $\CC^2\times\ch_2$.

Since $C_A$ is defined as the zero locus of an explicit theta function,
 (c) is a direct application of Andreotti-Mayer theory: see
\cite[Prop 6]{AM} or \cite[Ch. 6.4]{ACGH} for details.
  
For (d), let $S=A/(-1)$ be the Kummer surface for~$A$. The projection
$A\to S$ is $2:1$, so the image of a smooth symmetric curve is a smooth
curve. From the Hurwitz formula we can find the genus of the image and
the number of branch points:
$$
2\cdot 5 - 2 = 2(2g - 2) + b.
$$
As we know that $b \geq 12$, the only possibility is $g = 0,\ b = 12$.
\end{proof}

\subsection{An explicit computation}\label{sec:expcom}
We will describe explicitly the curve $C_A$ as the zero locus of an odd theta function in the case where $A=E\times F$ is a product of elliptic curves 
with a product polarisation. Let
$$
E = \CC/\tau_1\ZZ+\ZZ,\ F = \CC/\tau_2\ZZ+\ZZ,\ 
\Lambda=\left[\begin{array}{cccc} \tau_1& 0& 1& 0\\ 0&\tau_2& 0&
    4\end{array}\right]\ZZ^4,\ A = \CC^2/\Lambda.
$$ 
We can take a standard decomposition
$$
\CC^2 =\left[\begin{array}{cc} \tau_1&
    0\\ 0&\tau_2\end{array}\right]\RR^2 + \RR^2
$$ 
and write theta functions explicitly: for $v=(v_1,v_2)\in\CC^2$ and $\omega=(0,\frac{1}{4})$ we have
\[
\Th{\omega}{0}(v)
= \sum_{l_1,l_2\in\ZZ} \exp \left( \pi il_1^2\tau_1
+ \pi i\left(l_2 +\frac{1}{4}\right)^2\tau_2 
+2\pi i l_1v_1
+ 2\pi iv_2 \left(l_2 +\frac{1}{4}\right) \right).
\]
\[
\Th{3\omega}{0}(v)
= \sum_{l_1,l_2\in\ZZ} \exp\left(\pi il_1^2\tau_1
+ \pi i\left(l_2 +\frac{3}{4}\right)^2\tau_2 
+2\pi i l_1v_1
+ 2\pi iv_2\left(l_2 +\frac{3}{4}\right) \right).
\]

For the computations, let us denote
\begin{align*}
a_l &= \exp\left(\pi il^2\tau_1 + 2\pi i v_1l\right),\\
b_l^\pm &= \exp\left(\pi i \left( l+\frac{1}{2}\pm\frac{1}{4}\right)^2\tau_2 + 2\pi i v_2\left(l+\frac{1}{2}\pm\frac{1}{4}\right)\right).
\end{align*}
Then, since the series converge absolutely
$$
\theta_A = \Th{3\omega}{0}-\Th{\omega}{0}
=\sum_{l_1,l_2}a_{l_1}b^+_{l_2}-\sum_{l_1,l_2}a_{l_1}b^-_{l_2}
=\sum_{l_1}a_{l_1}(\sum_{l_2} b^+_{l_2}-\sum_{l_2} b^-_{l_2}).
$$
For $v_1 = \frac{1}{2} + \frac{1}{2}\tau_1$ we have
\begin{align*}
a_l &= \exp\left(\pi il^2\tau_1+\pi i l\tau_1 + \pi il \right)\\
    &= \exp\left(\pi i\left(l+\frac{1}{2}\right)^2\tau_1-\frac{1}{4}\pi i\tau_1 + \pi il\right)\\
    &= (-1)^l\exp\left(\pi i\left(l+\frac{1}{2}\right)^2\tau_1-\frac{1}{4}\pi i\tau_1 \right).
\end{align*}
Now,
$a_l = -a_{-l-1}$, so $\sum_l a_l = 0$ and therefore for any $v_2$ we
find $\theta_A(\frac{1}{2} + \frac{1}{2}\tau_1,v_2)=0$.  The image of
this component of $\{v\mid \theta_A(v)=0\}$ in $A$ is a curve isomorphic to $F$.

Similar computations can be carried out for $v_2=0,\ v_2=2,\ v_2=\frac{1}{2}\tau_2$
and $v_2=2+\frac{1}{2}\tau_2$. For $v_2=0$ and $v_2=2$, we
have $b^+_l=b^-_{-l-1}$
.  For $v_2=\frac{1}{2}\tau_2$ and $v_2=2+\frac{1}{2}\tau_2$ we have
$b^+_l=b^-_{-l}$. In all cases we get $\sum_l b_l^+=\sum_l b_l^-$.
The images in $A$ of those zeros are isomorphic to $E$, so by
Lemma~\ref{lem:baspr}(a), we know we have found all the zeros of $\theta_A$.

\begin{rem}\label{rem:12points}
Explicit computations show that in the product case, $C_{E\times F}$ contains twelve 2-torsion points with multiplicity one in 
$E\times F$. Let $F[2]=\{p,q,r,s\}$. Then $F'=F/F[2]$ is an elliptic curve and 
$C_{E\times F}$ is an \'etale Klein cover of $E\cup F'$ with $E\cap F'$ being a two torsion point both in $E$ and $F'$ (see Figure \ref{fig:Product-elliptic}). 
\end{rem}

\begin{figure}[h]
 \begin{center}
  \includegraphics[width=11cm,height=3.5cm]{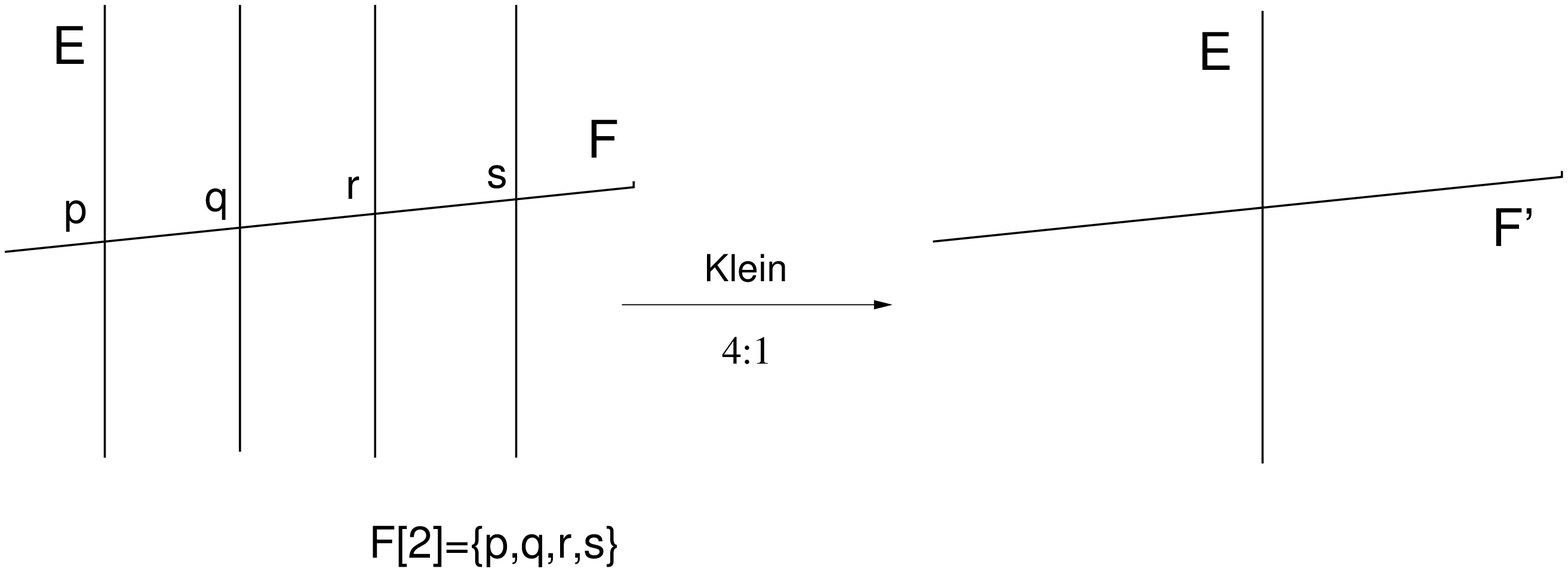}
  \caption{}
  \label{fig:Product-elliptic}
 \end{center}
\end{figure}

The picture in the product case is not a coincidence. Let $Z$ be a period matrix of $A$ and $w_1=(0,2),\ w_2=(\frac{z_{12}}{2},\frac{z_{22}}{2})$ 
be preimages in $\CC^2$ of generators of the subgroup of 2-torsion points in $K(L_0)$. 
One can prove the following technical lemma.
\begin{lem}\label{lem:teccom}
 Let $Z=[z_{ij}]$ be a period matrix of $A$ and $w_1=(0,2),\ w_2=(\frac{z_{12}}{2},\frac{z_{22}}{2})$. Then $$\theta_A(v+w_1)=-\theta_A(v)\ 
 \text{ and }\ \theta_A(v+w_2)=M(Z)\theta_A(v)$$ for some nonzero constant $M(Z)$. 
\end{lem}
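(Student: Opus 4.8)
The plan is to work entirely with the explicit theta series together with the two standard quasi-periodicity computations, one for a translation in the ``real'' direction and one in the ``$Z$-direction''. Writing $Z=[z_{ij}]$ for the symmetric period matrix and $\Lambda = Z\ZZ^2 + \diag(1,4)\ZZ^2$, the point $w_1=(0,2)=\tfrac12(0,4)$ is half of the real lattice generator $(0,4)$, while $w_2=(\tfrac{z_{12}}{2},\tfrac{z_{22}}{2})=Z\mu$ with $\mu=(0,\tfrac12)$ is half of the second column of $Z$. I would treat the four basis functions $\Th{c\omega}{0}$, $c=0,1,2,3$, with $\omega=(0,\tfrac14)$, simultaneously, and only at the end specialise to $\theta_A=\Th{3\omega}{0}-\Th{\omega}{0}$.

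For $w_1$ the computation is a direct substitution into the series. The shift contributes the extra exponent $2\pi i (l+c\omega)^T(0,2)=4\pi i l_2+\pi i c$ in every summand; since $\exp(4\pi i l_2)=1$ and $\exp(\pi i c)=(-1)^c$, I obtain $\Th{c\omega}{0}(v+w_1)=(-1)^c\Th{c\omega}{0}(v)$, with no $v$-dependence and no interference from the off-diagonal entry $z_{12}$. As the two characteristics occurring in $\theta_A$ both have odd $c$, this yields at once $\theta_A(v+w_1)=-\theta_A(v)$.

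For $w_2=Z\mu$ the key step is to complete the square in the exponent, converting the translation by $Z\mu$ into a shift of the characteristic by $\mu$. This gives $\Th{c\omega}{0}(v+w_2)=E(v)\,\Th{c\omega+\mu}{0}(v)$, where $E(v)=\exp(-\pi i\,\mu^T Z\mu-2\pi i\,\mu^T v)$ is the resulting nowhere vanishing factor of automorphy, common to all four basis functions. The essential algebraic feature is that $c\omega+\mu=(c+2)\omega$, so translation by $w_2$ realises the permutation $c\mapsto c+2 \pmod 4$ of the basis; because the lower characteristic is $0$, the reductions $\Th{4\omega}{0}=\Th{0}{0}$ and $\Th{5\omega}{0}=\Th{\omega}{0}$ introduce no further factor. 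In particular $w_2$ interchanges $\Th{\omega}{0}$ and $\Th{3\omega}{0}$, so that $\theta_A(v+w_2)=E(v)\big(\Th{\omega}{0}(v)-\Th{3\omega}{0}(v)\big)=-E(v)\,\theta_A(v)$, and one sets $M(Z)$ to be this proportionality factor.

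The main obstacle is the bookkeeping in the $w_2$-case: one must verify that completing the square produces a factor genuinely independent of the summation index and common to all four $\Th{c\omega}{0}$, so that it survives intact under the difference defining $\theta_A$, and one must carry out the reduction of the shifted characteristic modulo $\ZZ^2$ correctly. A point worth stressing is that, as written, $E(v)$ retains the Gaussian factor $\exp(-2\pi i\,\mu^T v)$; this $v$-dependence is exactly the automorphy correction inherent in the identification $t_{w_2}^*L_0\cong L_0$ afforded by $w_2\in K(L_0)$, and after making that identification the proportionality becomes the genuine constant $M(Z)$, equal to the $v$-independent part $-\exp(-\pi i\,\mu^T Z\mu)$. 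In any case only the non-vanishing of this factor is used downstream, since it already shows that the divisor $C_A=(\theta_A=0)$ is preserved by translation by $w_1$ and $w_2$, hence by the Klein group they generate.
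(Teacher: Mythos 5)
Your proof is correct and follows essentially the same route as the paper's: a direct substitution for $w_1$ (producing the factor $\exp(2\pi i\cdot\tfrac14\cdot 2)=-1$ on each of $\Th{\omega}{0}$ and $\Th{3\omega}{0}$) and, for $w_2=Z\mu$, completing the square to shift the characteristic by $\mu$, which realises the swap $\Th{\omega}{0}\leftrightarrow\Th{3\omega}{0}$ up to a common nowhere-vanishing factor. Your observation that this factor retains a $v$-dependent automorphy part, so that the literal constant $M(Z)$ only emerges after the canonical identification $t_{w_2}^*L_0\cong L_0$ and that only its non-vanishing is needed for Corollary \ref{cor:invCA}, is if anything more careful than the paper's own one-line treatment.
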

\begin{proof}
Both equalities follows from direct technical computations.
One can deduce the first equality from \cite[Remark 8.5.3a]{BL}, as $w_1\in\ZZ^2$ and $\exp(2\pi i\cdot\tfrac{1}{4}\cdot2)=-1$ and the other constants are 0.
The main ingredient to show the second equality  is that $\Th{\omega}{0}(v+w_2)=M(Z)\Th{3\omega}{0}(v)$  and 
$\Th{3\omega}{0}(v+w_2)=M(Z)\Th{\omega} {0}(v)$ for some constant $M(Z)$ that depends only on $Z$.

Note that one can deduce this result from Theorem \ref{thm:hyp}
that is proved indepedently.  
\end{proof}

\begin{cor}\label{cor:invCA}
In particular, the curve $C_A=\{\theta_A=0\}$ is invariant with respect to a subgroup of translations isomorphic to the Klein group, namely 
$K(L_0) \cap A[2]$.
\end{cor}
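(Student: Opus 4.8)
The plan is to read off the statement as an essentially immediate consequence of Lemma \ref{lem:teccom}, whose two transformation formulas encode the only analytic input needed. The crucial observation is that \emph{both} multipliers appearing there, namely $-1$ and $M(Z)$, are \emph{nonzero} constants. Hence translation by $w_1$ or by $w_2$ carries $\theta_A$ to a nonzero scalar multiple of itself, and a theta function and any nonzero scalar multiple of it have the same zero divisor. So the geometric content reduces to a one-line deduction, and the remaining work is purely group-theoretic bookkeeping on $w_1,w_2$.

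First I would record invariance under each translation separately. If $v\in\CC^2$ satisfies $\theta_A(v)=0$, then by Lemma \ref{lem:teccom} we get $\theta_A(v+w_1)=-\theta_A(v)=0$, and since $-1\neq 0$ the converse implication holds as well; thus translation by $w_1$ preserves $C_A=\{\theta_A=0\}$ setwise. The identical argument with the nonzero constant $M(Z)$ in place of $-1$ shows that translation by $w_2$ also preserves $C_A$. Because the set of translations of $A$ fixing the divisor $C_A$ is a subgroup of $A$, it contains the subgroup $\langle w_1,w_2\rangle$ generated by the classes of $w_1$ and $w_2$.

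Next I would identify this subgroup with $K(L_0)\cap A[2]$ and check that it is a Klein group. A direct computation shows that $2w_1=(0,4)$ and that $2w_2$ is the second column of $Z$, both of which lie in the period lattice of $A$; hence $w_1$ and $w_2$ each define points of order $2$, and for general $A$ they are independent, so $\langle w_1,w_2\rangle\cong\ZZ_2\times\ZZ_2$ is the Klein four-group. By the description preceding Lemma \ref{lem:teccom}, $w_1$ and $w_2$ are preimages of generators of the $2$-torsion subgroup of $K(L_0)$; and since the polarisation is of type $(1,4)$ we have $K(L_0)\cong\ZZ_4\times\ZZ_4$, whose $2$-torsion subgroup has order exactly $4$. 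Therefore $\langle w_1,w_2\rangle=K(L_0)\cap A[2]$, which completes the identification.

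All the substance is thus carried by Lemma \ref{lem:teccom}, and I do not expect a genuine obstacle here. The only point demanding a little care is the final identification: one must verify that $w_1,w_2$ are honestly of order $2$ and independent, and that their span exhausts $K(L_0)\cap A[2]$ rather than being a proper subgroup. Given the structure $K(L_0)\cong\ZZ_4\times\ZZ_4$ forced by the type $(1,4)$, both the order count and the independence are routine, so the corollary follows at once.
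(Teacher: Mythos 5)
Your proposal is correct and follows exactly the route the paper intends: the corollary is stated as an immediate consequence of Lemma \ref{lem:teccom}, since the nonzero multipliers $-1$ and $M(Z)$ preserve the zero divisor, and $w_1,w_2$ generate $K(L_0)\cap A[2]\cong\ZZ_2\times\ZZ_2$. The only tiny over-hedge is your phrase ``for general $A$'': independence of $w_1,w_2$ modulo the lattice holds for every $A$, not just a general one.
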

Now, we would like to prove the uniqueness (up to translation) of such curve. 
 \begin{lem}\label{lem:4cop}
 Let $A=E\times F$ be a $(1,4)$ product polarised abelian surface. There are exactly four copies of symmetric curves that passes through twelve $2$-torsion points and all of them are translations by $2$-torsions of $C_{E\times F}$.
 \end{lem}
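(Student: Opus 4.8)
The plan is to describe \emph{all} symmetric effective divisors in the algebraic equivalence class of $C_{E\times F}$ completely explicitly, exploiting the product structure, and then to rewrite the condition of ``passing through twelve $2$-torsion points'' (understood, as in Lemma~\ref{lem:baspr}(b), as $|A_2^-(D)|=12$) as a purely combinatorial constraint on the multiplicities of $D$ at the $2$-torsion points. First I would take $E$ and $F$ non-isogenous, which is the relevant case for the degeneration, so that $\mathrm{NS}(E\times F)$ is spanned by the classes of a vertical fibre $\{0\}\times F$ and a horizontal fibre $E\times\{0\}$, and every line bundle has the form $p_1^*N_1\otimes p_2^*N_2$. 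Since $C_{E\times F}$ is of type $(1,4)$, with one vertical and four horizontal components, every effective divisor $D$ algebraically equivalent to it has the shape $D=(\{e\}\times F)+(E\times D_F)$, where $e\in E$ is the unique zero of the one-dimensional space of sections in the $E$-direction and $D_F$ is an effective divisor of degree $4$ on $F$. A direct check shows $(-1)^*D=D$ if and only if $e\in E[2]$ and $D_F$ is symmetric.

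Next I would compute $A_2^-(D)$ directly. For a $2$-torsion point $(a,b)\in E[2]\times F[2]=A[2]$ one has $\mult_{(a,b)}(D)=\delta_{a,e}+\mult_b(D_F)$, so $(a,b)\in A_2^-(D)$ precisely when this integer is odd. Summing over the four values of $a$ for each fixed $b$ yields $|A_2^-(D)|=2k+4$, where $k=|\{b\in F[2]:\mult_b(D_F)\text{ is odd}\}|$. Hence $D$ passes through twelve $2$-torsion points if and only if $k=4$, i.e.\ $\mult_b(D_F)$ is odd for every $b\in F[2]$. As $\deg D_F=4$ and each of the four points of $F[2]$ must then appear with odd, hence positive, multiplicity, the only option is $D_F=\sum_{b\in F[2]}\{b\}$, the reduced sum of the four $2$-torsion points of $F$; this divisor is automatically symmetric.

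Finally I would read off the statement. The symmetric divisors through twelve $2$-torsion points are exactly $D_e=(\{e\}\times F)+(E\times F[2])$ for $e\in E[2]$, which gives four curves. Translating $C_{E\times F}=D_{e_0}$ by $(t_1,t_2)\in A[2]$ fixes the horizontal part, because $F[2]+t_2=F[2]$, and moves the vertical fibre to $\{e_0+t_1\}\times F$; as $t_1$ runs over $E[2]$ this produces precisely the $D_e$. Thus the four curves are exactly the $A[2]$-translates of $C_{E\times F}$, equivalently the orbit of size $16/4=4$ under the translation stabiliser $K(L_0)\cap A[2]=\{0\}\times F[2]$, which is the Klein group of Corollary~\ref{cor:invCA}. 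The only delicate point, and the main obstacle, is the completeness of the decomposition $D=(\{e\}\times F)+(E\times D_F)$: this is exactly where non-isogeny of $E$ and $F$ (so that $\mathrm{NS}(A)=\ZZ^2$) is used; everything afterwards is elementary parity bookkeeping.
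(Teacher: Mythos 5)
Your proof is correct and follows essentially the same route as the paper's: both decompose any symmetric divisor in the class as one vertical fibre plus a degree-4 horizontal divisor, use the symmetry to place the vertical fibre over a point of $E[2]$, and use the twelve-points condition to force the horizontal part to be $E\times F[2]$, so the four curves are exactly the translates of $C_{E\times F}$ by $E[2]\times\{0\}$. The only difference is in how the decomposition is justified --- the paper cites a lemma of Birkenhake--Lange on the fixed component of the linear system (or the explicit splitting of $\theta_A$ as a product of two series), whereas you use $\mathrm{NS}(E\times F)=\ZZ^2$ for non-isogenous $E,F$ together with the K\"unneth decomposition of sections, and your parity count $|A_2^-(D)|=2k+4$ is slightly more explicit than the paper's.
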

 \begin{proof}
Denote by $\pi_E$ and $\pi_F$ the corresponding projections. We claim that, any curve in the linear system of a polarising line bundle on $A$ is a union of one fibre of $\pi_E$ and four fibres of $\pi_F$ (counted with multiplicity if needed). To see this, note that by \cite[Lemma 10.1.1]{BL}, the linear system has a fixed component that is a fibre of $\pi_E$. Then, the moving part is given by $\pi_F^*(\cO(4))$ and hence is a union of 4 copies of fibres of $\pi_F$. Actually, we already have seen this fact when we proved that $\theta_{E\times F}$ is a product of two series in two independent variables.

Now, let $C=E_0\cup F_1\cup\ldots\cup F_4$ be the decomposition into the union of fibres. Since $C$ is symmetric, the fixed component $E_0$ has to be the preimage of a $2$-torsion point on $E$. Then, the only case when $C$ passes through twelve 2-torsion points with odd multiplicity is when 
$\{\pi_F(F_i):i=1,\ldots 4\}=F[2]$ is the set of all 2-torsions on $F$. Therefore, there are exactly four copies
of such curves $C$ given by $2$-torsions on $E$ and they are defined by $C_{E\times F}=0$ and its translations by 2-torsion points on $E\times \{0\}$. 
\end{proof}

\begin{thm} \label{C_A}
Let $f:C\ra A$ be a smooth hyperelliptic genus 5 curve embedded in a general $(1,4)$ polarised abelian surface $A$. 
Then $C$ is a translation of the curve $C_A$.
In particular, if $L_0=\cO(f(C))$ is of characteristic 0, then $f(C)$ is a translation of $C_A$
by an element of $K(L_0)$. 
\end{thm}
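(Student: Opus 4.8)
The plan is to reduce an arbitrary embedded hyperelliptic curve to a symmetric curve through twelve $2$-torsion points, to realise the four translates of $C_A$ as such curves, and then to show by degeneration to the product case that no further curves occur. First I would normalise the embedding: composing $f$ with a translation I may assume $f(p)=0$ for some Weierstrass point $p$, which alters $f(C)$ only by a translation and hence does not affect the conclusion. By the motivating computation $f(C)$ is then symmetric, and by Lemma \ref{lem:WcapA2} the set $f(C)\cap A[2]$ consists precisely of the twelve images of the Weierstrass points, so $f(C)$ is an odd symmetric divisor with $|A_2^-(f(C))|=12$, the maximal value permitted by Proposition \ref{prop:BL475} in type $(1,4)$. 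It therefore suffices to classify, up to translation, the symmetric curves in the given N\'eron--Severi class passing through twelve $2$-torsion points.

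Next I would produce four such curves. In the characteristic-$0$ bundle $L_0$ the odd eigenspace of $H^0(L_0)$ is one-dimensional, spanned by $\theta_A$, so $C_A$ is the unique odd symmetric curve in $|L_0|$. By Corollary \ref{cor:invCA} it is invariant under the Klein group $G=K(L_0)\cap A[2]$ of order four, whence its $A[2]$-orbit consists of $|A[2]|/|G|=4$ distinct curves. Each of these is again symmetric, since translation by a $2$-torsion point preserves symmetry, and each still meets $A[2]$ in twelve points, being a translate of an odd divisor. These four translates of $C_A$ are four of the curves sought.

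The core of the proof is that for general $A$ there are no others, and I would establish this by degenerating to a product $E\times F$ with the product $(1,4)$ polarisation. The classical theta functions $\Th{\omega}{0}$ and $\Th{3\omega}{0}$, and hence $C_A$ together with its four translates, depend holomorphically on the period matrix $Z\in\ch_2$; as $Z$ approaches the period matrix of $E\times F$ these four families specialise to four distinct symmetric curves through twelve $2$-torsion points. Assembling the symmetric curves through twelve $2$-torsion points into a finite proper family over a neighbourhood of $E\times F$ in moduli, I would invoke upper semicontinuity of the fibre length: by Lemma \ref{lem:4cop} the product fibre is reduced with exactly four points, hence of length four, so every nearby fibre has length at most four; since the general fibre already contains the four translates of $C_A$, it consists of exactly these. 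Therefore the normalised $f(C)$ is a $2$-torsion translate of $C_A$, and undoing the initial translation shows $f(C)$ is a translation of $C_A$. When $\cO(f(C))=L_0$ has characteristic $0$, both $C_A$ and $f(C)$ lie in $|L_0|$, so the translating element $t$ satisfies $t^*L_0\cong L_0$ and thus $t\in K(L_0)$.

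The main obstacle is precisely this semicontinuity step. One must organise the symmetric curves through twelve $2$-torsion points into a family that is genuinely finite and proper over the moduli of $(1,4)$ surfaces, and one must verify that each of the four points of the product fibre occurs with multiplicity one, so that this fibre really has length four. The reducibility of the limit is what makes this delicate: $C_{E\times F}$ degenerates to the nodal curve $E\cup F'$ of Remark \ref{rem:12points}, so the limiting objects are singular, and one has to count them with the correct scheme structure and exclude the possibility that two distinct general curves collapse onto a single product curve.
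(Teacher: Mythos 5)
Your proposal follows the paper's own proof in all essentials: normalise by translation so that a Weierstrass point maps to $0$, use Lemma \ref{lem:WcapA2} to see that $f(C)$ is a symmetric curve through twelve $2$-torsion points cut out by a theta function, degenerate to a product of elliptic curves via the holomorphic dependence of classical theta functions on the period matrix, and conclude with Lemma \ref{lem:4cop}. The semicontinuity-of-fibre-length discussion you add (and correctly flag as the delicate point) is your way of making precise the degeneration step, which the paper itself leaves at the same level of brevity.
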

\begin{proof}
By translating if needed, we can assume that $f(p)=0$ for some Weierstrass point $p \in C$. Then, by Lemma 
\ref{lem:WcapA2}, $C$ passes through 
twelve 2-torsion points on $A$ and is defined as the zero locus of some (classical) theta function. Since classical theta functions are holomorphic 
on the Siegel space,  we can degenerate  $A$ to a product of elliptic curves and by Lemma \ref{lem:4cop}, $C$ has to be a translation of $C_A$. The second part
 follows from the definition of $K(L_0)$.
\end{proof}

\begin{rem}
In  \cite{BOPY},  it is proved that the number of hyperelliptic curves in the fixed linear system $h^{A,FLS}_{g,\beta}$ for a $(1,4)$ polarised surface 
$A$ and a genus 5 curve equals 4 (see \cite[Table~1]{BOPY}). It comes from the curve $C_A$ and its translations by elements of $K(L_0)$. 
Although, there are 16 elements of  $K(L_0)$, the curve is invariant with respect to $K(L_0)\cap A[2]$, so there are exactly 4 copies of hyperelliptic 
curves in  the linear system of a $(1,4)$ polarising line bundle on a general abelian surface.  
 \end{rem}
\begin{rem}
In \cite{BLvS} and rewritten in \cite[Section 10.5]{BL}, it is shown that the map attached to the linear series of the polarising line bundle $L_0$ on a general $(1,4)$ polarised abelian surface is birational onto its image. Using the notation from the paper, we have  $\theta_A=X_3-X_1=Z_3$ hence the image of $C_A$ is given by the hyperplane section $Z_3=0$. 
\end{rem}

\section{Klein coverings}

The hyperelliptic curves constructed in \S 3 admit an action of the Klein group $V_4 \simeq \ZZ_2 \times \ZZ_2$.  
In this section we show a necessary and sufficient condition for a Klein covering of a hyperelliptic curve $H$ of genus $g\geq 2$ to be hyperelliptic. 
First, we recall some facts about the Weil pairing on the group $JH[2]$ of 2-torsion points of a hyperelliptic curve that can be found in \cite[Section 5.2]{DO}. 

Let $W=\{w_1, \ldots ,w_{2g+2}\} \subset H$ be the set of Weierstrass points of the curve H. Given a subset $S \subset I=\{1, \ldots, 2g+2\}$ the divisor
\begin{equation} \label{W.pts}
\alpha_S= \sum_{i \in S} w_i - |S| \cdot w_{2g+2}  
\end{equation}
defines an element in $JH[2]$ (where $\alpha_{\{2g+2\}}$ corresponds to zero divisor). Observe that $\alpha_S = \alpha_{I \setminus S}$.  
Denote by $E_g$  the $\FF_2$-vector space of functions $I \ra \FF_2$ having an even number of $0$'s and $1$'s
 modulo the constant functions $\{0,1\}$.  Hence, the elements of $E_g$ are represented by the subsets of even cardinality (up 
to complementary subset) and clearly $E_g \simeq \FF_2^{2g}$.  The correspondence $S \mapsto \alpha_S$ gives an isomorphism 
$E_g \simeq JC[2]$, so all the 2-torsion points on a hyperelliptic Jacobian are of the form \eqref{W.pts}. Moreover, $E_g$ carries a symmetric bilinear form 
$$
e: E_g \times E_g \ra \FF_2, \qquad e(S,T) = |S\cap T| \mbox{ mod } 2,
$$
which is a non-degenerated symplectic form. Under the above isomorphism, this form corresponds to the Weil pairing on the 2-torsion points on $JH$, for details see \cite[Section 5.2]{DO}.
A subspace $G \subset E_g$ is {\it isotropic} with respect to $e$ if $e(\alpha_S,\alpha_T) =  0$ (that is, if the divisors $\alpha_S $ and $\alpha_T$  share
an even number of points) for all $\alpha_S,  \alpha_T \in G $; otherwise the subspace $G$ is called {\it non-isotropic}.  

Now we will make precise what type of coverings arose in Section 3.

\begin{defn}
A \textit {Klein covering} is a couple $(H, G)$ with $H$ a curve of genus $g \geq 1$ and $G$ a subgroup of $JH[2]$ isomorphic to 
the Klein group $V_4=\ZZ_2 \times \ZZ_2$.  If we additionaly assume that $G$ is non-isotropic, we call it a  \textit {non-isotropic Klein covering}.

\end{defn}
The definition is independent of the choice of a basis for  $G$ but in practice we will work with a pair of generators $\eta_1 , \eta_2 $ of $G$.   
In a first step we consider  an  \'etale double covering $h:C\ra H$ given by a 2-torsion point $\eta$.  The following proposition has been proved in \cite{F}
but for sake of clarity we give a proof here.

\begin{prop} \label{farkas-lemma}
Let $H$ be a hyperelliptic curve of genus $g$ and $h:C\ra H$ an \'etale double covering defined by $\eta\in JH[2]$.  Then $C$ is hyperelliptic if and only if 
$\eta= \cO_H(w_1-w_2)$, where $w_1, w_2 \in H $ are Weierstrass points.
\end{prop}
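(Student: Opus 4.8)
The plan is to prove both implications by passing to explicit hyperelliptic equations and reading off the class of $\eta$ directly from the equation of the double cover. Throughout, write $\phi\colon H\ra\PP^1$ for the hyperelliptic map, $y^2=f(x)=\prod_i(x-t_i)$ for a model of $H$, and note that the $2g+2$ branch points $t_i$ are the images of the Weierstrass points.

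For the "if" direction, suppose $\eta=\cO_H(w_1-w_2)$ with $w_1,w_2$ Weierstrass points, say $t_j=\phi(w_j)$. The function $\psi=\tfrac{x-t_1}{x-t_2}$ on $H$ has divisor $2w_1-2w_2$, which is even, so the étale double cover attached to $\eta$ is $C\colon s^2=\psi$. Since $x=\tfrac{t_1-t_2s^2}{1-s^2}$ is a rational function of $s$, we get $\CC(x)\subset\CC(s)$, hence $\CC(C)=\CC(s,y)$ is a degree-two extension of $\CC(s)\cong\CC(\PP^1)$; thus $s$ exhibits $C$ as a double cover of $\PP^1$ and $C$ is hyperelliptic. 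One verifies en route that $f(x(s))$ is not a square in $\CC(s)$ (the branch points $t_3,\dots,t_{2g+2}$ contribute odd-order zeros), so the cover is nontrivial, and that it has the expected $4g=2g(C)+2$ branch points.

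For the "only if" direction, the conceptual core is to show that the two hyperelliptic structures are compatible. Let $\tilde\iota$ be the hyperelliptic involution of $C$ (it exists and is central in $\Aut(C)$ since $g(C)=2g-1\geq 3$) and let $\sigma$ exchange the sheets of $h$. Centrality gives $\tilde\iota\sigma=\sigma\tilde\iota$, so $\tilde\iota$ descends to an involution $\iota'$ on $H=C/\sigma$; as $\sigma$ is fixed-point-free while $\tilde\iota$ is not, $\iota'\neq\id$. Now $C/\langle\sigma,\tilde\iota\rangle=(C/\tilde\iota)/\bar\sigma\cong\PP^1/\langle\bar\sigma\rangle\cong\PP^1$, so $H/\iota'\cong\PP^1$; being hyperelliptic of genus $g\geq 2$, $H$ carries a unique involution with rational quotient, so $\iota'$ is the hyperelliptic involution $\iota$ of $H$. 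Hence the hyperelliptic maps fit over a degree-two map $g\colon C/\tilde\iota\ra H/\iota$ of two copies of $\PP^1$.

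With this square in hand I would finish by coordinates. Write $g$ as $s^2=\tfrac{x-b_1}{x-b_2}$, where $b_1,b_2$ are its two branch points (Hurwitz forces exactly two). Comparing degrees over $\CC(x)$ shows $\CC(C)=\CC(x,y,s)$ (the two quadratic extensions $\CC(x,y)$ and $\CC(x,s)$ of $\CC(x)$ are distinct since $C\ra H$ is nontrivial), so the cover $h$ is precisely $s^2=\tfrac{x-b_1}{x-b_2}$ over $H$, and $\eta$ is the $2$-torsion class it defines. Finally I invoke that $h$ is étale: the function $\tfrac{x-b_1}{x-b_2}$ must then have even divisor on $H$, which forces each $b_j$ to be a branch point of $\phi$, i.e.\ $b_j=\phi(w_j)$ for Weierstrass points $w_1,w_2$. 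Its divisor is then $2w_1-2w_2$, whence $\eta=\cO_H(w_1-w_2)$, as claimed. I expect the compatibility step to be the main obstacle, namely proving that $\tilde\iota$ descends to the hyperelliptic involution of $H$ (equivalently, that $C/\langle\sigma,\tilde\iota\rangle$ is rational); everything afterwards is a routine divisor computation, with the only other technical points being the identification $\CC(C)=\CC(x,y,s)$ and the use of étaleness to locate $b_1,b_2$ among the branch points.
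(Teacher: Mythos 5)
Your proof is correct, but it proceeds by explicit function-field computations where the paper argues intrinsically. For the \emph{if} direction the paper applies the projection formula: with $D=h^{-1}(w_2)$ one has $h^0(C,\cO_C(D))=h^0(H,\cO(w_2))+h^0(H,\cO(w_2)\otimes\eta)=2$ because $h_*\cO_C=\cO_H\oplus\eta$ and $\cO(w_2)\otimes\eta\cong\cO(w_1)$, so $D$ is a $g^1_2$ --- a one-line argument, whereas you exhibit the $g^1_2$ through the rational parametrisation $x=(t_1-t_2s^2)/(1-s^2)$; your version has the advantage of producing an explicit equation for $C$. For the \emph{only if} direction both proofs begin by descending the hyperelliptic involution $\tilde\iota$ of $C$ to that of $H$, but then diverge: the paper counts Weierstrass points --- the $4g$ fixed points of $\tilde\iota$ fill exactly $2g$ of the $2g+2$ fibres over Weierstrass points of $H$, and on the two remaining fibres $\tilde\iota$ acts freely, so $h^{-1}(w_1)\sim h^{-1}(w_2)$ are hyperelliptic divisors and $w_1-w_2\in\Ker h^*=\{0,\eta\}$ --- while you pass to the induced degree-two map of the two $\PP^1$ quotients, write it as $s^2=(x-b_1)/(x-b_2)$, and use \'etaleness to force $b_1,b_2$ to lie under Weierstrass points. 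Both routes are complete (your auxiliary claims --- centrality of $\tilde\iota$, rationality of $C/\langle\sigma,\tilde\iota\rangle$, distinctness of the two quadratic subextensions of $\CC(C)/\CC(x)$ --- all hold for $g\geq 2$); note however that the paper's fixed-point count also yields the precise distribution of the Weierstrass points of $C$ over those of $H$ (Corollary \ref{cor:farkas-lemma}), which is used later in the proof of Theorem \ref{thm:klehyp}, and this information remains only implicit in your argument.
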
 

\begin{proof}
Let $\iota_H$ be the hyperelliptic involution on $H$ and $s$ the automorphism of $C$ exchanging the sheets of the covering.
Suppose that $C$ is hyperelliptic with hyperelliptic involution $ \iota$. Note that the genus of $C$ is $2g-1$ and there are $4g$ Weierstrass points on $C$.
According to the proof of Proposition \ref{prop:hypcov}, $ \iota$ descends to $\iota_H$, so $h$ maps the fixed points of $\iota$ into fixed 
points of $\iota_H$. If a point $p \in C$ is fixed by $\iota$, then $s(p) $ is also fixed by $\iota$, since $s$ and $\iota$ commute.  This implies that there are
exactly $2g$ fibres over Weierstrass points on $H$ containing  all the Weierstrass points of $C$ and 4 points on $C$,
the fibres of the two remaining Weierstrass points, say $w_1, w_2$,  which are fixed points of the involution $\iota s$.  
Since $h^{-1}(w_1) \simeq h^{-1} (w_2)$ are linear equivalent to the hyperelliptic divisor, the 2-torsion point $\eta:=\cO_H(w_1-w_2) \in \Ker h^*$.
This means  that  the double covering $h$ is defined by $\eta$.

For the converse, suppose $h:C\ra H$ is given by a 2-torsion point of the form $\eta:=\cO_H(w_1-w_2)$. Consider the divisor $D:=h^{-1}(w_2)$ of degree 2 on $C$.
By the projection formula we have that 
$$
h^0(C, \cO_C(D) ) = h^0(H, h_*(h^* \cO_C(w_2) \otimes \cO_C) ) = h^0(H, \cO(w_2)) \oplus h^0(H, \cO(w_2) \otimes \eta) = 2 
$$
since $h_*\cO_C = \cO_H \oplus \eta$.  Hence $D$ is a hyperelliptic divisor on $C$ which finishes the proof.
\end{proof}

\begin{figure}[h] 
 \begin{center}
  \includegraphics[width=6.5cm,height=5.5cm]{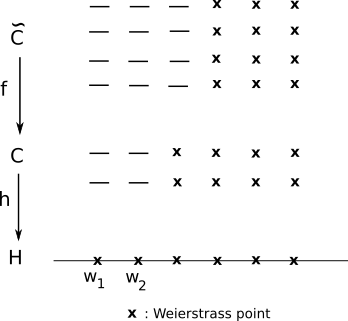}
 \caption{}
  \label{fig:W.pts}
 \end{center}
\end{figure}

\begin{cor}\label{cor:farkas-lemma}
In particular, we have shown the distribution of Weierstrass points on $C$. If $\{w_1,\ldots,w_{2g+2}\}$ is the set of Weierstrass points on H and the covering is defined by $\cO_H(w_1-w_2)$ then the set of Weierstrass points of $C$ is $\{v_{i1},v_{i2}:\ i=3,\ldots,2g+2\}$ and $h(v_{i1})=h(v_{i2})=w_i$, see Figure \ref{fig:W.pts}.
\end{cor}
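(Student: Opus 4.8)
The plan is to read off the distribution directly from the proof of Proposition \ref{farkas-lemma}, making the bookkeeping of fixed points explicit. By that proposition $C$ is hyperelliptic; write $\iota$ for its hyperelliptic involution and $s$ for the sheet-exchanging involution of $h$. As recalled in the proof of Proposition \ref{prop:hypcov}, $\iota$ descends to the hyperelliptic involution $\iota_H$ of $H$, so $h$ carries $\Fix(\iota)$ --- the set of Weierstrass points of $C$ --- into $\Fix(\iota_H)=W=\{w_1,\dots,w_{2g+2}\}$. First I would fix a Weierstrass point $w_i$ and analyse its fibre $h^{-1}(w_i)=\{p,s(p)\}$, which consists of two distinct points since $h$ is \'etale of degree $2$ (so $s$ is fixed-point-free). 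Because $s$ and $\iota$ commute, this fibre is $\iota$-invariant, so either $\iota$ fixes both points --- and then $v_{i1}:=p,\ v_{i2}:=s(p)$ are two Weierstrass points of $C$ lying over $w_i$ --- or $\iota$ interchanges them, in which case $p,s(p)\in\Fix(\iota s)$ and the fibre carries no Weierstrass point of $C$.

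Next I would pin down how many fibres fall into each case by a pure count. The genus of $C$ is $g(C)=2g-1$, hence $C$ has $2g(C)+2=4g$ Weierstrass points, and these are distributed among the $2g+2$ fibres over $W$, two apiece in the first case and none in the second. Writing $k$ for the number of fibres of the interchanging type, one gets $2\big((2g+2)-k\big)=4g$, forcing $k=2$: exactly two Weierstrass points of $H$ have fibres consisting of fixed points of $\iota s$, and over the remaining $2g$ Weierstrass points $C$ carries precisely two Weierstrass points each.

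The one genuinely non-formal step --- which I expect to be the main obstacle --- is to certify that the two exceptional fibres sit over $w_1$ and $w_2$, rather than over some other pair. This is exactly where the hypothesis $\eta=\cO_H(w_1-w_2)$ enters, and it is already established inside the proof of Proposition \ref{farkas-lemma}: there $h^{-1}(w_1)$ and $h^{-1}(w_2)$ are shown to be linearly equivalent hyperelliptic divisors on $C$. A hyperelliptic divisor is a generic fibre of the degree-$2$ map $C\to\PP^1$, that is an $\iota$-orbit $\{q,\iota q\}$ with $q\neq\iota q$; hence for $p\in h^{-1}(w_1)$ one has $\iota(p)=s(p)$, so $\iota s$ fixes the fibre while $\iota$ interchanges its two points. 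Thus the fibres over $w_1$ and $w_2$ are exactly the two interchanging ones produced by the count, and consequently the Weierstrass points of $C$ are precisely the pairs $v_{i1},v_{i2}$ lying over $w_i$ for $i=3,\dots,2g+2$, as claimed (see Figure \ref{fig:W.pts}).
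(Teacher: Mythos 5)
Your proof is correct and follows essentially the same route as the paper: the corollary is stated there with no separate proof precisely because it is read off from the fixed-point bookkeeping in the proof of Proposition \ref{farkas-lemma} (each fibre over a Weierstrass point of $H$ carries either two Weierstrass points of $C$ or none, the count $4g=2(2g+2-k)$ forces $k=2$, and the two exceptional fibres are the hyperelliptic divisors $h^{-1}(w_1)\sim h^{-1}(w_2)$). Your write-up just makes this extraction explicit, including the small but worthwhile check that the exceptional fibres sit over $w_1$ and $w_2$ themselves.
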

\begin{rem}
Proposition \ref{farkas-lemma} shows that a hyperelliptic curve admits exactly ${ 2g+2 \choose 2}$ non-trivial  \'etale double coverings which are 
hyperelliptic. In particular, for $g=2$ all the \'etale double coverings are hyperelliptic,   (see \cite[Section 7]{mu}).
\end{rem}

Now, we consider a Klein covering $(H, G)$ with $\eta_1$ and $\eta_2$ generators of $G$ and $g(H) \geq 2$.  Let $h: C \ra H$ be the double 
covering defined by $\eta_1$ and $f:\tC \ra C$ the covering defined by $h^*\eta_2 \in JC[2] \setminus \{ 0\}$. 
Let $s$ be the involution exchanging the sheets of $h$  and $\tau$ the involutions exchanging the sheets of $f$. Then, $s$ lifts to involutions on $\tC$ denoted by $\sigma$ and $\sigma\tau$ and it is easy to check that $\tC$ admits an action of 
$\{\id, \sigma, \tau, \sigma\tau \} \simeq V_4$. Moreover,  the $\tC \ra H$ is Galois with Galois group the Klein group $V_4$.  
\begin{rem}
Note that the curve $\tC$ does not depend on the choice of the generators of $G$, although, the curve $C$ does. 
Choosing a pair of generators gives a path on the commutative diagram:
\begin{equation} \label{diag1}
\xymatrix@R=1.1cm@C=.9cm{
& \tC \ar[d]_{f} \ar[dr]^{2:1} \ar[dl]_{2:1} &\\
C_{\sigma} \ar[dr]_{\eta_2}  & C_{\tau} \ar[d]_{h}^{\eta_1}   & C_{\sigma\tau}  \ar[dl]^{\eta_1 +\eta_2}  \\
& H &
}
\end{equation}
where $C_{\alpha} = \tC / \langle \alpha \rangle$ with $\alpha \in \Aut (\tC)$. 
\end{rem}
\begin{rem}
The necessary and sufficient condition for $s$ to be liftable is that 
the 2-torsion defining $f$ is a pullback of a 2-torsion from $JH$. 
Therefore, we obtain an equivalent definition of the Klein covering:
A Klein covering is a map $f:\tC\ra H$, that is an \'etale Galois covering with the monodromy representation isomorphic to $V_4$.
\end{rem}

Now, we proceed to the main result of this section. Although, we are interested in the case $g(H)=2$, the statement is true for any $g(H)\geq 2$.
 \begin{thm}\label{thm:klehyp}
Let $H$ be a smooth hyperelliptic curve of genus $g\geq 2$ with a subgroup $G=\{0,\eta_1,\eta_2,\eta_1+\eta_2\} \subset JH[2]$ and let $\tC \ra H$ be the Klein covering of (H,G).
 \begin{itemize}
 \item[(a)] If $\tC$ is hyperelliptic then $G$ is non-isotropic with respect to the Weil pairing on $JH[2]$.
 \item[(b)] If  $G$ 
 is non-isotropic with $\eta_1$ and $\eta_2$ (and $\eta_1+\eta_2$) being the difference of two Weierstrass points,
 then $\tC$ is hyperelliptic.
 \end{itemize}
 \end{thm}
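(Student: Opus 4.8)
The plan is to work along the tower $\tC\ra C_\tau\ra H$ and reduce both statements to two applications of Proposition \ref{farkas-lemma}, supplemented by the combinatorics of the Weierstrass points. Recall that, writing $\eta_1=\alpha_{S_1}$ and $\eta_2=\alpha_{S_2}$ with $|S_1|,|S_2|$ even, one has $e(\eta_i,\eta_i)=0$ automatically, so $G$ is non-isotropic precisely when $e(\eta_1,\eta_2)=|S_1\cap S_2|$ is odd. Thus part (a) amounts to producing a shared Weierstrass point between the supports of $\eta_1$ and $\eta_2$, and part (b) to exploiting such a shared point.

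For part (b) I would relabel the Weierstrass points so that the common point of the two supports is $w_2$, that is $\eta_1=\cO_H(w_1-w_2)$ and $\eta_2=\cO_H(w_2-w_3)$; this is possible exactly because $G$ is non-isotropic. By Proposition \ref{farkas-lemma} the cover $h\colon C_\tau\ra H$ defined by $\eta_1$ is hyperelliptic, and by Corollary \ref{cor:farkas-lemma} the fibre $h^*w_3$ consists of two Weierstrass points $v_{31},v_{32}$ of $C_\tau$, while $h^*w_2$ is linearly equivalent to the hyperelliptic divisor $g^1_2$ of $C_\tau$. Since $g^1_2\sim 2v_{31}$, I compute
$$
h^*\eta_2=\cO_{C_\tau}(h^*w_2-h^*w_3)\sim\cO_{C_\tau}(2v_{31}-v_{31}-v_{32})=\cO_{C_\tau}(v_{31}-v_{32}),
$$
a difference of two Weierstrass points of $C_\tau$. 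As $\tC\ra C_\tau$ is the double cover defined by $h^*\eta_2$, a second application of Proposition \ref{farkas-lemma} shows that $\tC$ is hyperelliptic. The non-isotropy hypothesis enters exactly at the relation $h^*w_2\sim g^1_2$: in the isotropic case $w_2$ would lie outside the support of $\eta_1$, the fibre $h^*w_2$ would split into two Weierstrass points, and the computation would fail.

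For part (a) I would begin from the hyperelliptic involution $\tilde\iota$ on $\tC$, which is central and hence commutes with $V_4$, descending through Proposition \ref{prop:hypcov} to the hyperelliptic involutions of $C_\sigma,C_\tau,C_{\sigma\tau}$ and of $H$; in particular all three intermediate curves are hyperelliptic, so by Proposition \ref{farkas-lemma} each of $\eta_1,\eta_2,\eta_1+\eta_2$ is a difference of two Weierstrass points. For $g\geq 3$ this already forces non-isotropy: if $S_1\cap S_2=\emptyset$ then $S_1\triangle S_2$ has four elements, and $\alpha_{S_1\triangle S_2}=\eta_1+\eta_2$ could not be represented by a two-element set, since the complementary set has $2g-2\geq 4$ elements. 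The genuine obstacle is the genus $2$ case, where a four-element subset is complementary to a two-element one and the symmetric-difference count alone is inconclusive.

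To settle this uniformly I would introduce a monodromy refinement. Writing $\pi\colon\tC\ra H$ for the covering, over a Weierstrass point $w$ of $H$ the fibre $\pi^{-1}(w)$ is a $V_4$-torsor on which $\tilde\iota$ acts as right translation by a well-defined element $v_w\in V_4$ (well-defined because $V_4$ is abelian). The points of $\pi^{-1}(w)$ are Weierstrass points of $\tC$ exactly when $v_w=0$, so comparing the number $2g(\tC)+2=8g-4$ of Weierstrass points of $\tC$ with $4\cdot\#\{w:v_w=0\}$ shows that exactly three Weierstrass points of $H$ have $v_w\neq 0$. For $C_\alpha=\tC/\langle\alpha\rangle$ the fibre over $w$ consists of Weierstrass points iff $v_w\in\langle\alpha\rangle$, so by Corollary \ref{cor:farkas-lemma} the support of the class defining $C_\alpha\ra H$ is $\{w:v_w\in V_4\setminus\langle\alpha\rangle\}$. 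Setting $n_\alpha=\#\{w:v_w=\alpha\}$, the three supports having size $2$ gives $n_\sigma+n_{\sigma\tau}=n_\tau+n_{\sigma\tau}=n_\sigma+n_\tau=2$, forcing each $n_\alpha=1$; so $v_w$ takes each non-zero value of $V_4$ once, at points $w_\sigma,w_\tau,w_{\sigma\tau}$, and the supports are $S_1=\{w_\sigma,w_{\sigma\tau}\}$, $S_2=\{w_\tau,w_{\sigma\tau}\}$. Hence $|S_1\cap S_2|=1$ and $G$ is non-isotropic. I expect this monodromy bookkeeping, and especially the clean solution of the three linear equations above, to be the technical heart of the argument, the genus $2$ case being the point where the naive combinatorial count is insufficient.
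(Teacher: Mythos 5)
Your proof is correct, and part (b) is essentially the paper's argument verbatim: relabel so the supports of $\eta_1$ and $\eta_2$ share a Weierstrass point, apply Proposition \ref{farkas-lemma} to $h$, use $h^{-1}(w_2)\sim 2v_{31}$ to see that $h^*\eta_2$ is again a difference of two Weierstrass points, and apply Proposition \ref{farkas-lemma} once more. For part (a) you take a genuinely different route. The paper also works along a single chosen tower $\tC\ra C\ra H$: it applies Proposition \ref{farkas-lemma} to the upper cover to write $h^*\eta_2=\cO_C(v_{i1}-v_{i2})$, observes via Corollary \ref{cor:farkas-lemma} that the two points must lie in one fibre $h^{-1}(w_i)$ with $w_i$ outside the support $\{w_1,w_2\}$ of $\eta_1$, and then descends by the linear equivalence $h^{-1}(w_1)\sim 2v_{i1}$ to conclude $\eta_2=\cO_H(w_1-w_i)$ up to adding $\eta_1$ -- so the two supports share a point. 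Your argument instead treats the three intermediate quotients symmetrically: you encode the action of the hyperelliptic involution on each fibre over a Weierstrass point of $H$ by an element $v_w\in V_4$, read off the supports of $\eta_1$, $\eta_2$, $\eta_1+\eta_2$ as the sets $\{w: v_w\notin\langle\alpha\rangle\}$ via Corollary \ref{cor:farkas-lemma}, and solve the resulting three equations to get $n_\sigma=n_\tau=n_{\sigma\tau}=1$. Both rest on the same two lemmas; the paper's version is shorter and avoids the torsor bookkeeping, while yours is more symmetric in the three involutions, makes the distribution of the $12$ non-Weierstrass fibre points over three distinguished Weierstrass points of $H$ explicit (which is exactly the picture used later for the curves $C_{\iota\sigma}$, etc.), and correctly isolates why the naive symmetric-difference count fails only in genus $2$. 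One small economy you could note: the three linear equations already force each $n_\alpha=1$, so the separate count $\#\{w:v_w\neq 0\}=3$ is a consistency check rather than a needed input.
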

 
 \begin{proof}
(a) Suppose that $\tC$ is hyperelliptic. By Proposition \ref{prop:hypcov}, $C$ is hyperelliptic hence according to Proposition \ref{farkas-lemma} and 
Corollary \ref{cor:farkas-lemma},  
the 2-torsion point $\eta'_2 := h^*(\eta_2) \in JC[2]$ is of the form $\cO_C(v_{i1} -v_{i2})$ where $v_{i1}$ and $v_{i2}$ are Weierstrass points of $C$ and 
they lie on the fibre $h^{-1} (w_i)$ of a  Weierstrass point $w_i \in H$ for some $i\in\{1,\ldots,2g+2\}$. Since $C$ is hyperelliptic, we also have that  
$\eta_1= \cO_H(w_1 -w_2)$,
for some Weierstrass points, say $w_1, w_2 \in H$. By the distribution of the Weierstrass points of $C$ shown in Corollary \ref{cor:farkas-lemma},  
$w_i \neq w_1$
and $w_i \neq w_2$.  On the other hand,  one can set  $\eta_2= \cO_H(w_1- w_i)$ since
$$
 h^*(\eta_2) = \cO_C( h^{-1}(w_1) -v_{i1} -v_{i2} ) =  \cO_C( v_{i1} -v_{i2} )=\eta_2' 
$$
with $h^{-1}(w_1)$ being the hyperelliptic divisor linear equivalent to $2v_{i1}$.  Hence, the Weil pairing $e(\eta_1, \eta_2) = 1$ since the corresponding
 divisors share the point $w_1$ (see Figure \ref{fig:W.pts} for the case $g=2$).

(b) Suppose that $G$ is non-isotropic and $\eta_1= \cO_H(w_1 -w_2)$, so necessarily $\eta_2$ share a point with $\eta_1$ say 
 $\eta_2= \cO_H(w_1 -w_3)$, with $w_1, w_2, w_3$ Weierstrass points.  By  Proposition \ref{farkas-lemma}, the covering $C \ra H$ defined by 
 $\eta_1$ is hyperelliptic and one checks that $\eta_2' = h^*(\eta_2)$ is the difference of two Weierstrass points, namely $\cO_C( v_{31} -v_{32} )$, 
 therefore the  double covering  $\tC \ra C$ given by $\eta'_2$ is hyperelliptic as well.

\end{proof}
As an immediate consequence we have:
\begin{cor} \label{cor:klehyp}
If $H$ is a smooth curve of genus 2, the Klein covering given by $(H, V_4)$ is hyperelliptic if and only if $V_4$ is non-isotropic with respect to the 
Weil pairing.
\end{cor}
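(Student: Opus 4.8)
The plan is to deduce the corollary directly from Theorem \ref{thm:klehyp}. Since every smooth curve of genus $2$ is hyperelliptic, that theorem applies verbatim to $H$, and its two parts already supply the two implications of the ``if and only if''; the only point that needs checking is that for $g=2$ the extra hypothesis appearing in part (b)---that the generators of $G$ be differences of two Weierstrass points---holds automatically.

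First I would record the count of $2$-torsion. For $g=2$ there are $2g+2=6$ Weierstrass points $w_1,\dots,w_6$, so $JH[2]\simeq E_2\simeq\FF_2^4$ has exactly $15$ nonzero elements. By the description \eqref{W.pts} every nonzero class is represented by a subset of even cardinality up to complement, and for $g=2$ the only even cardinalities are $0,2,4,6$; since a $4$-element set is complementary to a $2$-element set and the full $6$-element set is complementary to the empty set, every nonzero element equals $\alpha_{\{i,j\}}$ for some pair $\{i,j\}$. This is consistent with the count $\binom{6}{2}=15$, so in fact each nonzero class corresponds to a unique pair.

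The key observation is then that each such class is a difference of two Weierstrass points. Indeed, each divisor $2w_k$ is linearly equivalent to the hyperelliptic divisor, so $2w_j\sim 2w_6$, whence
$$
\alpha_{\{i,j\}}=\cO_H(w_i+w_j-2w_6)=\cO_H\bigl(w_i+w_j-2w_j\bigr)=\cO_H(w_i-w_j).
$$
Thus on a genus $2$ curve every nonzero $2$-torsion point---in particular each of $\eta_1,\eta_2$ and $\eta_1+\eta_2$---is a difference of two Weierstrass points, so the hypothesis of Theorem \ref{thm:klehyp}(b) collapses to the single requirement that $G$ be non-isotropic.

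Finally I would assemble the equivalence. If $\tC$ is hyperelliptic then Theorem \ref{thm:klehyp}(a) gives that $V_4$ is non-isotropic with respect to the Weil pairing. Conversely, if $V_4$ is non-isotropic then, by the observation above, its generators are automatically differences of two Weierstrass points, so Theorem \ref{thm:klehyp}(b) applies and $\tC$ is hyperelliptic. I do not expect any genuine obstacle here: the entire analytic content has been front-loaded into Theorem \ref{thm:klehyp}, and the corollary is precisely the special feature of genus $2$---via the elementary identity $2w_j\sim 2w_6$---that renders condition (b) vacuous.
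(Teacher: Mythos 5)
Your proposal is correct and follows exactly the route the paper intends: the paper states this corollary as an immediate consequence of Theorem \ref{thm:klehyp}, and the only detail to supply is the one you give, namely that on a genus $2$ curve every nonzero $2$-torsion point is $\cO_H(w_i-w_j)$ for some pair of Weierstrass points (via $2w_j\sim 2w_6$), so the extra hypothesis in part (b) is automatic. Nothing is missing.
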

\begin{rem}
Klein coverings of hyperelliptic curves are Galois coverings of maximal degree that can be hyperelliptic. To see this, note that by Proposition 
\ref{prop:hypcov}, the degree of such covering has to be a 2-group. On the other hand by Theorem   \ref{thm:klehyp}, the necessary condition for the 
Klein covering to be hyperelliptic is that the Klein group defining it is non-isotropic. So, if the Galois covering is of degree $> 4$ then for
$\eta_1,\eta_2,\eta_3$ generators of a $\ZZ^3_2$ subgroup of the group defining a covering, we have $e(\eta_1, \eta_2) = e(\eta_1, \eta_3) = 1$, hence
$e(\eta_1, \eta_2-\eta_3) = 0$.  Therefore, for any covering 
of degree strictly bigger than $4$ there exists an isotropic Klein subgroup. Thus at least one of the quotient curves is non-hyperelliptic, which 
contradicts Proposition \ref{prop:hypcov}. This fact has been already proved in \cite{Mac} using Fuchsian groups.
\end{rem}


\subsection{The second construction of $C_A$} \label{2nd-const}
From the perspective of this section, we would like to realise $C_A$ as a Klein covering.
Let $(A, L)$ be a general $(1,4)$ polarised abelian surface. Let $\eta, \gamma$ be any generators of $K(L) \simeq \ZZ_4\times\ZZ_4$. Denote by 
$K=\{0,2\gamma, 2\eta, 2\gamma+2\eta\}=K(L)\cap A[2]$. Let 
$$
A_{2\eta}=A/\langle 0,2\eta\rangle, A_{\eta\gamma}=A/\langle 0, 2\eta+2 \gamma\rangle  \text{ and } 
A_{2\gamma}=A/\langle 0,2\gamma\rangle.
$$

Since $|K|=4$, the surface $A/K$ is principally polarised and because $A$ is general, we can assume $A/K=JH$ is the Jacobian of a smooth 
genus 2 curve $H$. We have the following diagram of polarised isogenies:
\begin{equation} \label{diagsurfaces}
\xymatrix@R=1.1cm@C=.9cm{
& A \ar[d]_{2:1} \ar[dr]^{2:1} \ar[dl]_{2:1} &\\
A_{2\eta} \ar[dr]_{2:1}  & A_{\eta\gamma} \ar[d]_{2:1}   & A_{2\gamma}  \ar[dl]^{2:1}  \\
& JH &
}
\end{equation}
Now, we will build up curves as preimages of respective projections. Let $\pi:A\lra JH$. Then $\pi(\eta), \pi(\gamma), \pi(\eta+\gamma)\in JH[2]$ are  
2-torsion points and they define \'etale double coverings of $H$ denoted $C_{\eta}, C_{\eta+\gamma}, C_{\gamma}$. Moreover, the group $G=\{0, \pi(\eta), 
\pi(\gamma), \pi(\eta+\gamma)\}$ defines an \'etale Klein covering of $H$. The coverings can be realised as restrictions of the respective projections
\begin{equation} \label{diagcurvesandsurfaces}
\xymatrix@R=1.1cm@C=.9cm{
& \pi^{-1}(H)\subseteq A \ar[d]_{2:1} \ar[dr]^{2:1} \ar[dl]_{2:1} &\\
C_{\eta}\subseteq A_{2\eta} \ar[dr]_{2:1}  &C_{\eta+\gamma} \subseteq A_{\eta\gamma} \ar[d]_{2:1}   &C_{\gamma}\subseteq A_{2\gamma}  \ar[dl]^{2:1}  \\
& H\subseteq JH &
}
\end{equation}
\begin{rem}
Note that the groups $K$ and $\pi(K)$ do not depend on the chosen generators so, up to permutation, the curves do not depend on them either. Moreover, by construction, $JH/G$ is the dual surface to $A$, so it has to be $(1,4)$ polarised.
\end{rem}

Let us describe the notion of being isotropic and non-isotropic Klein subgroups in terms of types of polarisation on quotient abelian surfaces in the following lemma.
\begin{lem}\label{lem:polarisations}
Let $(JH,\Theta)$ be a general principally polarised surface and let $G\simeq V_4$ be a subgroup of $JH[2]$. Then
\begin{itemize}
\item[(a)] $JH/G$ is principally polarised if and only if $G$ is isotropic.
\item[(b)] $JH/G$ is of type $(1,4)$ if and only if $G$ is non-isotropic.
\end{itemize}
\end{lem}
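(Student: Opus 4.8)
The plan is to read off the type of the induced polarisation on $Y:=JH/G$ by descending a suitable power of $\Theta$ along the quotient isogeny $\pi\colon JH\to Y$ (of degree $|G|=4$) and then computing a self-intersection and, where needed, excluding one numerical possibility. Two facts from \cite{BL} drive the argument. First, under the principal polarisation the commutator pairing $e^{2\Theta}$ on $K(2\Theta)=JH[2]$ is precisely the Weil pairing $e_2$, so ``$G$ isotropic'' means exactly $e^{2\Theta}|_{G}\equiv 1$. Second, the descent criterion: a line bundle $L$ with $G\subseteq K(L)$ descends along $\pi$ if and only if $e^{L}|_{G}\equiv 1$, and then its N\'eron--Severi class is the unique one pulling back to $L$, since $\pi^{*}$ is injective on the torsion-free group $NS(Y)$.

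Because $G\cong V_4$ and $e_2$ is alternating, the only nontrivial value is $e_2(\eta_1,\eta_2)$, so $G$ is either isotropic ($e_2(\eta_1,\eta_2)=0$) or non-isotropic ($=1$); these two cases are exhaustive and mutually exclusive. Hence it suffices to prove the forward implications ``isotropic $\Rightarrow$ principal'' and ``non-isotropic $\Rightarrow(1,4)$'': the converses in (a) and (b) follow formally, since a polarisation has a well-defined type and $(1,1)\neq(1,4)$.

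In the isotropic case I would descend $2\Theta$: since $e^{2\Theta}|_{G}=e_2|_{G}\equiv 1$, the criterion yields $M$ on $Y$ with $\pi^{*}M=2\Theta$. Then $(\pi^{*}M)^2=\deg(\pi)\,M^2=4M^2$ together with $(2\Theta)^2=4\,\Theta^2=8$ (using $\Theta^2=2!=2$) gives $M^2=2$, i.e.\ $\chi(M)=1$, so $M$ is principal. In the non-isotropic case $2\Theta$ cannot descend, for if $\pi^{*}M=2\Theta$ then $e^{2\Theta}|_{G}=e^{\pi^{*}M}|_{\ker\pi}\equiv 1$, forcing $G$ isotropic. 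However $4\Theta=(2\Theta)^{\otimes 2}$ always descends, because $e^{4\Theta}|_{JH[2]}=(e^{2\Theta})^{2}=(e_2)^{2}\equiv 1$; this produces $N$ on $Y$ with $\pi^{*}N=4\Theta$, and the same computation $(4\Theta)^2=16\,\Theta^2=32=4N^2$ gives $N^2=8$, $\chi(N)=4$, so $N$ has type $(1,4)$ or $(2,2)$.

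The one delicate point, and the place I expect to spend the most care, is excluding type $(2,2)$ in the non-isotropic case (together with the bookkeeping of commutator-versus-Weil pairings that justifies the descent of $4\Theta$). Here I would argue by contradiction: a type-$(2,2)$ class equals $2\Xi$ for a principal polarisation $\Xi$ on $Y$, whence $4\Theta=\pi^{*}N=2\,\pi^{*}\Xi$, and since $NS(JH)$ is torsion-free we get $\pi^{*}\Xi=2\Theta$; but then $2\Theta$ descends, contradicting non-isotropy. Therefore $N$ is the primitive $(1,4)$ polarisation, which is exactly the polarisation induced on $Y$, namely the descent of the smallest power of $\Theta$ that descends. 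Note that genericity of $JH$ plays no role in this type computation; it only guarantees, as in the surrounding construction, that the quotient is the expected abelian surface.
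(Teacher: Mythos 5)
Your proof is correct and follows essentially the same route as the paper: identify the Weil pairing with the commutator pairing $e^{2\Theta}$, use the descent criterion of \cite[Cor.\ 6.3.5]{BL} to descend $2\Theta$ in the isotropic case and $4\Theta$ in general, and rule out type $(2,2)$ in the non-isotropic case because it would force $2\Theta$ to descend. The paper's version is just terser, leaving the self-intersection bookkeeping and the $(2,2)$ exclusion implicit where you spell them out.
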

\begin{proof}
Recall that the Weil pairing coincide with the notion of the commutator map $e^{2\Theta}$ introduced in \cite[Section 6.3]{BL} for $2\Theta$ being twice 
the principal polarisation on the Jacobian. Then, (a) follows directly from \cite[Cor 6.3.5]{BL}, since the only type of polarisation 
that can be pulled back by a degree 4 isogeny to a $(2,2)$ type is principal.

As for (b), note that $G$ is isotropic with respect to $e^{4\Theta}$, so the only possible type that can be pulled back to a $(4,4)$ type is 
$(1,4)$, since by (a) we have excluded the $(2,2)$ type.  
\end{proof}

Now, we can connect Theorem \ref{C_A} and Theorem \ref{thm:klehyp} and show that two constructions of hyperelliptic curves coincide. 
In particular, we reprove the fact that $C_A$ is invariant with respect to $K(L_0)\cap A[2]$.

\begin{thm}\label{thm:hyp}
A smooth hyperelliptic curve of genus 5 can be embedded into $(1,4)$ polarised abelian surface if and only if it is a non-isotropic \'etale Klein covering of a genus 2 curve.
In particular, after possible translation on JH, we proved that $\tC=\pi^{-1}(H)=C_A$.
\end{thm}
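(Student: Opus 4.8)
The plan is to show that the curve $\tC=\pi^{-1}(H)$ coming out of the second construction is a smooth hyperelliptic curve of genus $5$ embedded in a general $(1,4)$ polarised surface, and then to feed it into the uniqueness Theorem \ref{C_A}; the two constructions are thereby identified and the asserted equivalence follows.

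First I would check that $\tC$ meets every hypothesis needed to be hyperelliptic. By the construction recorded in diagram \eqref{diagcurvesandsurfaces}, the restriction $\pi|_{\tC}\colon\tC\ra H$ is an \'etale Klein covering of the genus $2$ curve $H$ with defining group $G=\pi(K(L))\subset JH[2]$. As $\pi$ is an isogeny, hence unramified, and $H$ is smooth, $\tC$ is smooth, and Hurwitz gives $g(\tC)=4(2-1)+1=5$. To invoke Theorem \ref{thm:klehyp}(b) I must see that $G$ is non-isotropic and that its nonzero elements are differences of two Weierstrass points. The second condition is automatic in genus $2$: under the isomorphism $E_g\simeq JH[2]$ every nonzero class is represented by a two-element subset, so it equals $\cO_H(w_i-w_j)$. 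For non-isotropy I would use that $JH/G=\widehat{A}$ is the dual of the $(1,4)$ polarised surface $A$, hence itself $(1,4)$ polarised, whereupon Lemma \ref{lem:polarisations}(b) forces $G$ to be non-isotropic. Theorem \ref{thm:klehyp}(b), equivalently Corollary \ref{cor:klehyp}, then shows $\tC$ is hyperelliptic.

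Having exhibited $\tC$ as a smooth hyperelliptic genus $5$ curve inside the general $(1,4)$ polarised surface $A$, Theorem \ref{C_A} applies directly and forces $\tC$ to be a translation of $C_A$; absorbing this translation on $JH$ yields $\tC=\pi^{-1}(H)=C_A$, which is the ``in particular'' claim and identifies the two constructions (and, via Corollary \ref{cor:invCA}, reproves the $K(L_0)\cap A[2]$-invariance of $C_A$). The forward implication of the equivalence is then immediate: any smooth hyperelliptic genus $5$ curve embedded in a general $(1,4)$ polarised surface is, by Theorem \ref{C_A}, a translate of $C_A=\pi^{-1}(H)$, hence a translate of the non-isotropic \'etale Klein covering $\tC\ra H$ of a genus $2$ curve.

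For the converse I would reverse the construction, starting from an abstract non-isotropic \'etale Klein covering $\tC\ra H$ with $H$ of genus $2$ and $G\subset JH[2]$ its group. Lemma \ref{lem:polarisations}(b) gives a $(1,4)$ polarisation on $\widehat{A}:=JH/G$, so its dual $A$ is $(1,4)$ polarised, and the dual isogeny $\pi\colon A\ra JH$ realises $\pi^{-1}(H)$ as the \'etale covering of $H$ attached to $G$---applying Proposition \ref{prop:cyccov} to the two double coverings into which the Klein covering factors. Since the pair $(H,G)$ determines the Klein covering uniquely, this preimage is $\tC$ itself, which is thus embedded in a $(1,4)$ polarised surface. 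I expect the only real friction to be bookkeeping: one must line up the dualities so that $JH/G=\widehat{A}$ exactly, and note that it is precisely the genus $2$ feature ``every $2$-torsion is a difference of Weierstrass points'' that lets Theorem \ref{thm:klehyp}(b) fire with no extra hypothesis. Once these are in place the uniqueness Theorem \ref{C_A} carries the essential weight.
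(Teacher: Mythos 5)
Your proposal is correct and follows essentially the same route as the paper: build the curve $\pi^{-1}(H)$ from the quotient diagram, use Lemma \ref{lem:polarisations}(b) together with Theorem \ref{thm:klehyp}(b) (i.e.\ Corollary \ref{cor:klehyp}) to see it is hyperelliptic, and then invoke the uniqueness statement of Theorem \ref{C_A} to identify it with $C_A$ and to get the forward implication. You in fact spell out several steps the paper leaves implicit (smoothness and genus of $\tC$, and the genus-$2$ fact that every $2$-torsion point is a difference of Weierstrass points), which only strengthens the argument.
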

\begin{proof}
Let $f:\tC\lra H$ be an \'etale Klein covering defined by a non-isotropic subgroup. Then we can build Diagram \ref{diagcurvesandsurfaces} 
from downstairs, hence $\tC$ can be embedded in a 
(1,4) polarised surface, say $A$, and its image is invariant with respect to the Klein subgroup of translations. By possible translation of $H$, we can 
assume that $\tC$ is symmetric and defines the line bundle of characteristic $0$.
Then, by Theorem \ref{C_A}, $\tC=C_A$.

The second implication follows from the fact that every $(1,4)$ polarised surface can be constructed in this way. 

\end{proof}

There is a natural involution on the moduli of abelian surfaces, namely dualisation. The following remark describes the corresponding involution 
on the Klein coverings.

\begin{rem}\label{rem:dual}
Let $G$ be a non-isotropic Klein subgroup of JH[2]. Since the Weil pairing is non-degenerate, there exists a unique orthogonal complement to $G$ and 
it is also non-isotropic Klein group. It is denoted by $\widehat{G}$, since it can be seen as the dual to $G$ when using the canonical identification with the 
dual surface.

Then, by construction, the Klein coverings defined by $(H,G)$ and $(H,\widehat{G})$ are embedded into dual abelian surfaces.
\end{rem}
\section{Decomposition of $J\tC$}

In order to have a better understanding of the decomposition of $J\tC$, we introduce the following notation.
\begin{defn}
Let $X$ be an 
abelian variety and $M_i$ abelian varieties such that there exist embeddings $M_i\hookrightarrow X$ for $i=1,\ldots,k$.
We write $$X=M_1 \boxplus M_2\boxplus\ldots\boxplus M_k$$ if $\epsilon_{M_1}+\epsilon_{M_2}+\ldots\epsilon_{M_k}=1$, where 
$\epsilon_{M_i}$ are the associated symmetric idempotents. In particular,  $X=M\boxplus N$ if and only if $(M,N)$ is a pair of complementary abelian subvarieties of $X$.

\end{defn}
\begin{prop}\label{prop:absub}Let $X$ be an abelian variety.
\begin{itemize}
\item[(a)] If $X$ is non-simple then there exist $M$ and $N$ such that $X=M\boxplus N$.
\item[(b)] If $X=M\boxplus N$ and $M=M_1\boxplus M_2$ then  $X=M_1\boxplus M_2\boxplus N$.
\item[(c)] If $M_i$ are simple and $\Hom(M_i,M_j)=0$ for $i\neq j$ then the set $\{0,X, M_i:i=1,\ldots,k\}$ is the set of all abelian subvarieties of $X$.
In particular, all $M_i$'s and the presentation are unique (up to permutation). 
\end{itemize}
\end{prop}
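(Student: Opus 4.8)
The plan is to recast everything in terms of symmetric idempotents of $\End_\QQ(X)$, using the bijection between abelian subvarieties of $X$ and symmetric idempotents (\cite[Theorem 5.3.2]{BL}), under which $M=\im(\epsilon_M)$, the idempotent $\epsilon_M$ restricts to the identity on $M$, and $\epsilon\mapsto 1-\epsilon$ sends a subvariety to its complement. Part (a) is then immediate: if $X$ is non-simple, pick any abelian subvariety $M$ with $0\neq M\neq X$ and let $N=\im(1-\epsilon_M)$ be its complementary subvariety. By construction $\epsilon_M+\epsilon_N=\epsilon_M+(1-\epsilon_M)=1$, which is exactly the condition $X=M\boxplus N$.

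For part (b) the real issue is to compare the idempotent of $M_1$ computed inside $M$ with the one computed inside $X$. Writing $\epsilon_M=i\,p$ with $i\colon M\hookrightarrow X$ and $p\colon X\to M$ in the isogeny category (so $p\,i=\id_M$), I would introduce $r\colon\End_\QQ(M)\to\End_\QQ(X)$, $r(\phi)=i\,\phi\,p$. Because $p\,i=\id_M$, this is a (non-unital) ring homomorphism with $r(\id_M)=\epsilon_M$, hence it carries idempotents to idempotents. The step I expect to be the main obstacle is that $r$ preserves \emph{symmetry}: this is the compatibility of the Rosati involution of $(M,i^*L)$ with that of $(X,L)$ under restriction of the polarisation, for which I would appeal to \cite[Section 5.3]{BL}. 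Granting it, $r(\epsilon_{M_1})$ is a symmetric idempotent of $X$ with image $i(M_1)=M_1$, so $r(\epsilon_{M_1})=\epsilon_{M_1}$ by uniqueness in the bijection, and likewise $r(\epsilon_{M_2})=\epsilon_{M_2}$. Therefore
$$
\epsilon_{M_1}+\epsilon_{M_2}=r(\epsilon_{M_1}+\epsilon_{M_2})=r(\id_M)=\epsilon_M ,
$$
and adding $\epsilon_N$ yields $\epsilon_{M_1}+\epsilon_{M_2}+\epsilon_N=\epsilon_M+\epsilon_N=1$, i.e. $X=M_1\boxplus M_2\boxplus N$.

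For part (c) I take the standing hypothesis $X=M_1\boxplus\cdots\boxplus M_k$, that is $\sum_i\epsilon_{M_i}=1$. The assumption $\Hom(M_i,M_j)=0$ for $i\neq j$ forces $\epsilon_{M_i}\epsilon_{M_j}=0$, since this product factors as $X\to M_j\to M_i\to X$; together with $\sum_i\epsilon_{M_i}=1$ this makes $\{\epsilon_{M_i}\}$ a complete family of orthogonal symmetric idempotents and gives a ring decomposition $\End_\QQ(X)\cong\prod_{i=1}^k\End_\QQ(M_i)$, the cross terms $\epsilon_{M_i}\End_\QQ(X)\epsilon_{M_j}\cong\Hom_\QQ(M_j,M_i)$ vanishing. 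Each factor $\End_\QQ(M_i)$ is a division algebra as $M_i$ is simple, so its only idempotents are $0$ and $1$; hence every idempotent of $\End_\QQ(X)$ has the form $\sum_{i\in S}\epsilon_{M_i}$ for some $S\subseteq\{1,\dots,k\}$, and all of these are symmetric (the central $\epsilon_{M_i}$ being fixed, the Rosati involution preserves each factor and fixes $0,1$). Through the bijection this shows that the abelian subvarieties of $X$ are precisely the partial sums $\boxplus_{i\in S}M_i$; in particular $0$, $X$ and the $M_i$ occur among them. Finally, the $\epsilon_{M_i}$ are exactly the primitive central idempotents of $\End_\QQ(X)$, hence intrinsic to the ring and independent of the chosen decomposition, so the simple constituents $M_i$ and the decomposition are unique up to permutation.
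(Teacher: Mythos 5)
Your proof is correct, and for part (c) it takes a genuinely different route from the paper's. The paper dismisses (a) and (b) as trivial; your treatment of (b) supplies the one point that actually needs checking, namely that the symmetric idempotent of $M_1$ computed in $(M,i^*L)$ is carried by $\phi\mapsto i\phi p$ to the symmetric idempotent of $M_1$ computed in $(X,L)$ (compatibility of the two Rosati involutions), which is the standard fact from \cite[Section 5.3]{BL} and checks out. For (c) the paper argues directly on subvarieties: given a non-zero $N\subset X$ it restricts each norm endomorphism $\Nm_{M_i}$ to $N$, uses simplicity to conclude each restriction is zero or an isogeny, uses $\sum_i\epsilon_{M_i}=1$ to see at least one is non-zero and $\Hom(M_i,M_j)=0$ to forbid two being non-zero, and concludes $N=M_i$. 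You instead decompose $\End_{\QQ}(X)\cong\prod_i\End_{\QQ}(M_i)$ into a product of division algebras and classify all idempotents. Your route buys strictly more: it shows the abelian subvarieties of $X$ are exactly the $2^k$ partial sums $\boxplus_{i\in S}M_i$, whereas the paper's argument tacitly assumes $N$ simple (``Since $N$ and $M_i$ are simple\dots''); indeed the literal claim of (c), that $\{0,X,M_i\}$ exhausts the abelian subvarieties, fails for $k\geq 3$ --- the paper's own Theorem \ref{thm:jacc_a} exhibits, e.g., $E^{(4)}_{\sigma}\boxplus E^{(4)}_{\tau}$ as a subvariety of $J\tC$. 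Both arguments deliver the uniqueness assertion that is what is actually used later, but yours proves the corrected, complete classification.
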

\begin{proof}
Part (a) and (b) are trivial. As for the last, let $N$ be a non-zero abelian subvariety embedded in $X$. Consider the restrictions of Norm 
endomorphism $\Nm_{M_i}$ to $N$ and denote them by $\Nm_i$. Since $N$ and $M_i$ are simple, $\Nm_i$ is an isogeny or the zero map. 
From the assumption that $\sum_i\epsilon_{M_i}=1$, we get that the $M_i$'s generate $X$ as a group, so at least one of the $\Nm_i$'s is non-zero. 
On the other hand if we had two isogenies $\Nm_i$ and $\Nm_j$, $i\neq j$ then we would get an isogeny $M_i\ra M_j$ which contradicts 
$\Hom(M_i, M_j)=0$. 
This means that exactly one $\Nm_i$ is non-zero, hence $N\subset M_i$ (as subvarieties in $X$) and from simplicity, $N=M_i$.
\end{proof}
\begin{rem}
Part (a) and (c) of Proposition \ref{prop:absub} can be seen as stronger forms of Poincare Reducibility Theorems.

For an abelian variety $M$ embedded in $X$ if there may be any confusion what is the type of restricted polarisation from $X$ to $M$, we will write
 $M^{(d_1,\ldots, d_{\dim_M})}$ where  $(d_1,\ldots, d_{\dim_M})$ is the type of the restricted polarisation from $X$ to $M$. 
\end{rem}
 
Let $A$ be a general $(1,4)$ polarised surface and $\tC$ be the hyperelliptic curve embedded in $A$. We denote by $\iota$  the hyperelliptic involution
on $\tC$. By results of Section 4, $\tC$ admits an action of the group 
$$
\ZZ_2^3 \simeq  \langle \iota, \sigma,  \tau \ :  \  \iota^2=\sigma^2 =\tau^2=1, \iota\sigma=\sigma \iota,  \tau\sigma =\sigma \tau,  \tau \iota=\iota\tau 
\rangle\subset \Aut{\tC},
$$
where $\sigma$ and $\tau$ are the fixed point free involutions. As before, for any $\alpha\in\Aut(\tC)$, we denote the quotient curve by 
$C_\alpha=\tC/\langle\alpha\rangle$. Since $\sigma, \tau$ and $\sigma\tau$ are fixed point free, we have that $C_{\sigma}, C_{\tau},C_{\sigma\tau}$ 
are the quotient curves of $\tC$ of genus 3 embedded in  $A_{\sigma}, A_{\tau}, A_{\sigma\tau}$. 

Corollary \ref{cor:farkas-lemma} shows that $|\Fix(\iota k)|=4$ for any $k\in\{\sigma, \tau, \sigma\tau\}$, hence  by Hurwitz formula we get that 
$C_{\iota\sigma}, C_{\iota\tau}, C_{\iota\sigma\tau}$  are quotient curves of genus 2. Moreover let 
 \begin{equation}\label{3-elliptic}
 E_{\sigma}=\tC/\langle \sigma, \iota\tau \rangle, \quad E_{\tau}=\tC/\langle \tau, \iota\sigma \rangle, \quad E_{\sigma\tau}=\tC/\langle \sigma\tau,
 \iota\sigma \rangle
\end{equation}
 be elliptic curves given by quotients by 4-elements subgroups. We also have  $H=\tC/\langle \sigma, \tau\rangle$ and rational curves that are the 
 quotient curves for any subgroup that contains $\iota$. In this way, we investigated all possible subgroups of $\ZZ^3_2$.

For any $k,l\in\{\sigma, \tau, \sigma\tau\},\  l\neq k$ we have the following quotient maps:
$$
\tC \ra C_k\ra E_{k}\ \text{  and }\ C_{\iota k}\ra E_l.
$$
Note that all obtained curves are hyperelliptic (or elliptic).

\begin{cor}\label{cor:jacdiv}
Applying Propositions \ref{prop:cyccov}, \ref{prop:doubra} and \ref{prop:douhyp} to the quotient maps we get the following decompositions of 
Jacobians of quotient curves:
$$
JC_{\iota\sigma}=E_{\tau}^{(2)}\boxplus E_{\sigma\tau}^{(2)},\ JC_{\iota\tau}=E_{\sigma}^{(2)}\boxplus E_{\sigma\tau}^{(2)},
\ JC_{\iota\sigma\tau}= E_{\sigma}^{(2)}\boxplus E_{\tau}^{(2)},\ 
$$ 
$$
JC_{\sigma}=\widehat{A}_{\sigma}^{(1,2)}\boxplus E_{\sigma}^{(2)},\ JC_{\tau}=\widehat{A}_{\tau}^{(1,2)}\boxplus E_{\tau}^{(2)},\ 
JC_{\sigma\tau}= \widehat{A}_{\sigma\tau}^{(1,2)}\boxplus E_{\sigma\tau}^{(2)}
$$
\end{cor}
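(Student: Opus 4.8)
The plan is to read off each summand as one member of a pair of complementary abelian subvarieties of the principally polarised Jacobian of the relevant quotient curve, and to verify complementarity by computing norm endomorphisms. The common mechanism is that every quotient curve inherits the residual action of $\ZZ_2^3/\langle\alpha\rangle\cong\ZZ_2\times\ZZ_2$ (where $\alpha$ is the element we quotiented by), and on it the image of the hyperelliptic involution $\iota$ descends and acts as $-1$ on the Jacobian. For each listed map I would first decide, via the Hurwitz formula together with the fixed-point count $|\Fix(\iota k)|=4$ from Corollary~\ref{cor:farkas-lemma} (and the fixed-point-freeness of $\sigma,\tau,\sigma\tau$), whether the double cover is branched or \'etale; this fixes which of Proposition~\ref{prop:doubra} or Proposition~\ref{prop:cyccov} applies and pins down the restricted polarisation type, while Proposition~\ref{prop:douhyp} identifies the complementary Prym as the image of a Jacobian.

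For the three genus-$2$ curves $C_{\iota\sigma}$, $C_{\iota\tau}$, $C_{\iota\sigma\tau}$, take $C_{\iota\sigma}$ as a model. The residual Klein group has nontrivial elements $\bar\iota=\bar\sigma$, $\bar\tau$ and $\overline{\iota\tau}=\overline{\sigma\tau}$; since $\bar\iota=-1$ on $JC_{\iota\sigma}$ and $\bar\tau\cdot\overline{\iota\tau}=\bar\iota$, the two involutions $\bar\tau,\overline{\iota\tau}$ are bielliptic with quotients $E_\tau=C_{\iota\sigma}/\langle\bar\tau\rangle$ and $E_{\sigma\tau}=C_{\iota\sigma}/\langle\overline{\iota\tau}\rangle$, each a branched double cover with two branch points. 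Proposition~\ref{prop:doubra} then embeds $E_\tau,E_{\sigma\tau}$ with restricted polarisation of type $(2)$ and norm maps $\Nm_{E_\tau}=1+\bar\tau$ and $\Nm_{E_{\sigma\tau}}=1+\overline{\iota\tau}=1-\bar\tau$ (using $\overline{\iota\tau}=\bar\iota\,\bar\tau=-\bar\tau$); equivalently, Proposition~\ref{prop:douhyp}(b) identifies $E_{\sigma\tau}$ with $P(C_{\iota\sigma}/E_\tau)$. Hence $\Nm_{E_\tau}+\Nm_{E_{\sigma\tau}}=2$, so $\epsilon_{E_\tau}+\epsilon_{E_{\sigma\tau}}=1$ and $JC_{\iota\sigma}=E_\tau^{(2)}\boxplus E_{\sigma\tau}^{(2)}$; the other two genus-$2$ cases are identical after permuting $\{\sigma,\tau,\sigma\tau\}$.

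For the three genus-$3$ curves $C_\sigma,C_\tau,C_{\sigma\tau}$, take $C_\sigma$. Its residual involutions are $\bar\iota$ (the hyperelliptic involution, with rational quotient), $\overline{\iota\tau}$ and $\bar\tau$. Here $C_\sigma\to E_\sigma=C_\sigma/\langle\overline{\iota\tau}\rangle$ is a branched double cover with four branch points, so Proposition~\ref{prop:doubra} gives $E_\sigma$ of type $(2)$ with $\Nm_{E_\sigma}=1+\overline{\iota\tau}=1-\bar\tau$, while $C_\sigma\to H=C_\sigma/\langle\bar\tau\rangle$ is \'etale (a fixed point of $\bar\tau$ would force a point of $\tC$ fixed by $\tau$ or by $\sigma\tau$). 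Applying Proposition~\ref{prop:cyccov} to this \'etale double cover, the image $f^*(JH)=JH/\langle\eta\rangle$ is the dual surface $\widehat A_\sigma$ carrying the restricted polarisation of type $(1,2)$, with idempotent $\tfrac12(1+\bar\tau)$. Since this is exactly the complement of $\epsilon_{E_\sigma}=\tfrac12(1-\bar\tau)$, we obtain $JC_\sigma=\widehat A_\sigma^{(1,2)}\boxplus E_\sigma^{(2)}$, and the other two cases follow by symmetry.

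The routine part is the bookkeeping of which residual involution descends to which cover and the norm-map identities, all of which reduce to the single relation that the descended hyperelliptic involution equals $-1$ and equals the product of the two elliptic involutions. The step I expect to demand the most care is the identification in the genus-$3$ cases of the \'etale-quotient image $f^*(JH)$ with the prescribed surface $\widehat A_\sigma$ and its $(1,2)$ polarisation: here one must match the abstract quotient $JH/\langle\eta\rangle$ of Proposition~\ref{prop:cyccov} with the surface $A_\sigma$ from diagram~\ref{diagsurfaces} and confirm, via Lemma~\ref{lem:polarisations}, that the restricted type is indeed $(1,2)$ rather than $(2,2)$.
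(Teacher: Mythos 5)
Your proof is correct and takes essentially the same route as the paper, which states this corollary with no argument beyond ``apply Propositions \ref{prop:cyccov}, \ref{prop:doubra} and \ref{prop:douhyp} to the quotient maps''; your identification of the residual involutions and the norm-map bookkeeping ($\Nm=1\pm\bar\tau$ summing to $2$) is precisely the verification that instruction presupposes. One cosmetic point: in the genus-$3$ cases the restricted type $(1,2)$ is already supplied by Proposition \ref{prop:cyccov} with $n=2$, so the final appeal to Lemma \ref{lem:polarisations} (which concerns quotients by Klein groups, not by a single $2$-torsion point) is unnecessary.
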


The main result of this section is the following theorem.
\begin{thm}\label{thm:jacc_a}
Let $A$ be a general $(1,4)$ polarised abelian surface and $\tC$ the hyperelliptic curve embedded in $A$. Then
$$J\tC=\widehat{A}^{(1,4)}\boxplus E^{(4)}_{\sigma} \boxplus E^{(4)}_{\tau}\boxplus E^{(4)}_{\sigma\tau}.$$

In this presentation we can see images of Jacobians of all quotient curves and Pryms of coverings. For example:
$$
\widehat{A}^{(1,4)}=\im JH=JH/V_4,
$$
$$
 E^{(4)}_{\sigma} \boxplus E^{(4)}_{\tau}\boxplus E^{(4)}_{\sigma\tau}=P(\tC/H)^{(1,1,4)},
$$
$$
 \widehat{A}^{(1,4)}\boxplus E^{(4)}_{\sigma}=\im JC_{\sigma}=JC_{\sigma}/\ZZ_2=P(\tC/C_{\iota\sigma})^{(1,2,2)},
 $$
 $$ 
  E^{(4)}_{\sigma} \boxplus E^{(4)}_{\tau}=\im JC_{\iota\sigma\tau}=JC_{\iota\sigma\tau}^{(2,2)}=P(\tC/C_{\sigma\tau})^{(2,2)}.
  $$
\end{thm}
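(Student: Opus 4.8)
The plan is to realise the required decomposition of $J\tC$ by assembling the abelian subvarieties coming from the three elliptic curves $E_\sigma, E_\tau, E_{\sigma\tau}$ together with the image $\widehat{A}$ of $JH$, and then to verify that the corresponding idempotents sum to $1$. First I would record the four natural subvarieties of $J\tC$: the image $\widehat{A}=\im(\pi^*)=JH/V_4$ of the principally polarised Jacobian $JH$ under the Klein covering $f:\tC\ra H$ (whose restricted polarisation is $(1,4)$ by Lemma \ref{lem:polarisations} and Proposition \ref{prop:cyccov}), and the three images of $E_\sigma, E_\tau, E_{\sigma\tau}$ defined in \eqref{3-elliptic}. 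The key computation is with idempotents in $\End_\QQ(J\tC)$. Since $\iota$ acts as $-1$ on $J\tC$ and $\sigma,\tau$ are fixed-point-free involutions commuting with $\iota$, the group $\ZZ_2^3$ gives commuting involutions on $J\tC$, and I would express each idempotent $\epsilon_{E_k}$ as a product of averaging projectors $\frac{1}{2}(1+\alpha)$ over the defining automorphisms, exploiting $\epsilon_\iota=0$ (as $\iota=-1$).

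The main structural input is the complementarity already established. By Theorem \ref{thm:hyp} the Prym $P(\tC/H)$ is the complement of $\widehat{A}=\im JH$, so
\begin{equation*}
\epsilon_{\widehat{A}}+\epsilon_{P(\tC/H)}=1.
\end{equation*}
Hence it suffices to show
\begin{equation*}
\epsilon_{P(\tC/H)}=\epsilon_{E_\sigma}+\epsilon_{E_\tau}+\epsilon_{E_{\sigma\tau}},
\end{equation*}
and that the three summands are the idempotents of pairwise-complementary (in fact mutually orthogonal) subvarieties. For this I would use the tower structure: applying Proposition \ref{prop:douhyp}(a) to the \'etale double cover $\tC\ra C_{\iota k}$ with hyperelliptic base shows that $P(\tC/C_{\iota k})$ splits as a product of the two elliptic Jacobians indexed by the other two involutions, which recovers precisely the second line of Corollary \ref{cor:jacdiv} read through the complementary-subvariety dictionary. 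Chaining these splittings via Proposition \ref{prop:absub}(b), starting from $J\tC=\widehat{A}\boxplus P(\tC/H)$ and refining $P(\tC/H)$ using the cover $\tC\ra C_\sigma\ra H$, yields the asserted four-fold decomposition.

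To pin down the polarisation types I would track them along each isogeny. The type $(1,4)$ on $\widehat{A}$ is Proposition \ref{prop:cyccov} with $n=4$; the type $(4)$ on each $E_k$ follows because $E_k$ is the image of an elliptic Jacobian under a chain of two degree-$2$ maps (so the restricted polarisation is $4$ times the principal one, consistently with Proposition \ref{prop:doubra} applied twice). The displayed identifications in the statement, such as $\widehat{A}^{(1,4)}\boxplus E_\sigma^{(4)}=\im JC_\sigma=P(\tC/C_{\iota\sigma})^{(1,2,2)}$, are then read off by combining the splittings of $JC_\sigma$ and $JC_{\iota\sigma}$ from Corollary \ref{cor:jacdiv} with Proposition \ref{prop:douhyp}: each equals a partial sum of the four idempotents, and its type is computed from the relevant degree-$2$ steps. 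The main obstacle I anticipate is purely bookkeeping rather than conceptual, namely verifying that the idempotents genuinely add to $1$ (equivalently that the four subvarieties are mutually orthogonal and generate $J\tC$, using $\dim\widehat{A}=2$ and $\dim E_k=1$ to match $\dim J\tC=5$) and that the polarisation types are tracked correctly through every isogeny so that all the restricted types in the displayed equalities come out as claimed.
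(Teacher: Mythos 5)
Your overall route is essentially the paper's: the paper applies Proposition \ref{prop:douhyp} to the \'etale double cover $\tC\ra C_\sigma$ (both curves hyperelliptic) to get $J\tC=\im JC_\sigma\boxplus\im JC_{\iota\sigma}$, then substitutes the splittings of Corollary \ref{cor:jacdiv} and chains them with Proposition \ref{prop:absub}(b); your reorganisation around $J\tC=\widehat{A}\boxplus P(\tC/H)$, with the types tracked afterwards, is the same computation read in a different order, and the bookkeeping you defer is left equally implicit in the paper.

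There is, however, one step that fails as written. You propose to apply Proposition \ref{prop:douhyp}(a) to ``the \'etale double cover $\tC\ra C_{\iota k}$'' and to conclude that $P(\tC/C_{\iota k})$ splits as a product of two elliptic Jacobians. But $\tC\ra C_{\iota k}$ is \emph{not} \'etale: by Corollary \ref{cor:farkas-lemma} the involution $\iota k$ has four fixed points (which is precisely why $g(C_{\iota k})=2$), so the hypothesis of part (a) is violated; and $P(\tC/C_{\iota k})$ has dimension $5-2=3$, so it cannot be a product of two elliptic curves in any case. The statements you want are the following: by part (b) (using that $\tC$ is hyperelliptic), $P(\tC/C_{\iota k})=\im JC_{k}=\widehat{A}^{(1,4)}\boxplus E_k^{(4)}$, which is three-dimensional; the two-elliptic-curve splitting belongs instead to $P(\tC/C_{k})=\im JC_{\iota k}=E_l^{(4)}\boxplus E_m^{(4)}$ with $\{k,l,m\}=\{\sigma,\tau,\sigma\tau\}$, obtained by applying part (a) to the genuinely \'etale cover $\tC\ra C_k$, whose lifts of the hyperelliptic involution of $C_k$ are $\iota$ and $\iota k$, so that $P(\tC/C_k)=J\PP^1\times JC_{\iota k}=JC_{\iota k}$, and then decomposing the bielliptic genus-$2$ curve $C_{\iota k}$. (Your reference to ``the second line of Corollary \ref{cor:jacdiv}'' also points at the wrong line: the two-elliptic-curve splittings are its first line.) Since these corrected splittings are exactly what Corollary \ref{cor:jacdiv} records, the slip is local and repairable, and the rest of your argument --- complementarity of $\widehat{A}$ and $P(\tC/H)$, chaining by Proposition \ref{prop:absub}(b), and the dimension count $2+1+1+1=5$ --- goes through and yields the stated decomposition.
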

\begin{proof}
Since $\tC$ is an \'etale double cover of $C_\sigma$ and both are hyperelliptic, Proposition \ref{prop:douhyp} gives $J\tC=JC_{\sigma}/\ZZ_2
\boxplus JC^{(2,2)}_{j\sigma}$. Applying Proposition \ref{prop:absub} and Corollary \ref{cor:jacdiv} one obtains the result.
\end{proof}
\begin{rem}
Theorem \ref{thm:jacc_a} is a more detailed version of \cite[Theorem 6.3.iv]{RR} in the particular case, when the base curve has genus 2, 
the coverings are \'etale and defined by non-isotropic Klein group.
\end{rem}

\end{document}